\documentclass[reqno,tbtags]{amsart}

\usepackage{amssymb,enumitem,cancel}
\usepackage{geometry}
\usepackage[foot]{amsaddr}
 
\usepackage{mathtools}
\usepackage{mathrsfs}
\usepackage{xfrac}
\usepackage{marginnote}
\usepackage{dsfont}
\usepackage{changepage}
\usepackage{xcolor}
\usepackage{soul}
\usepackage{extdash}
\usepackage{tikz-cd}
\usepackage{hyperref}
\usepackage{cleveref}
\numberwithin{equation}{section}

\newtheorem{thm}{Theorem}[section]
\newtheorem{prop}[thm]{Proposition}
\newtheorem{lem}[thm]{Lemma}
\newtheorem{cor}[thm]{Corollary}
\crefname{thm}{Theorem}{Theorems}
\crefname{prop}{Proposition}{Propositions}
\crefname{lem}{Lemma}{Lemmas}
\crefname{cor}{Corollary}{Corollaries}

\crefname{claim}{Claim}{Claims}
\crefname{step}{Step}{Steps}

\theoremstyle{definition}

\crefname{defn}{Definition}{Definitions}
\crefname{ex}{Example}{Examples}
\crefname{ass}{Assumption}{Assumptions}

\theoremstyle{remark}
\newtheorem{rmk}[thm]{Remark}
\crefname{rmk}{Remark}{Remarks}

\newtheorem*{theorem*}{Theorem}

\newtheoremstyle{namedass}%
    {}{}{}{}{\bfseries Assumption~}{.}{.5em}{#3}
\theoremstyle{namedass}
\newtheorem*{namedass}{Assumption}

\AddToHook{env/prop/begin}{\crefalias{thm}{prop}}
\AddToHook{env/lem/begin}{\crefalias{thm}{lem}}
\AddToHook{env/cor/begin}{\crefalias{thm}{cor}}
\AddToHook{env/claim/begin}{\crefalias{thm}{claim}}
\AddToHook{env/step/begin}{\crefalias{thm}{step}}
\AddToHook{env/defn/begin}{\crefalias{thm}{defn}}
\AddToHook{env/ex/begin}{\crefalias{thm}{ex}}
\AddToHook{env/rmk/begin}{\crefalias{thm}{rmk}}

\newcommand{\defeq}{\vcentcolon=}

\newcommand{\eqdef}{=\vcentcolon}

\renewcommand{\leq}{\leqslant}
\renewcommand{\geq}{\geqslant}

\let\temp\phi
\let\phi\varphi
\let\varphi\temp
\let\temp\varepsilon
\let\varepsilon\epsilon
\let\epsilon\temp

\DeclareMathOperator{\Lip}{Lip}

\DeclareMathOperator{\tr}{tr}

\newcommand{\N}{\mathbb{N}}
\newcommand{\Pc}{\mathcal{P}}

\newcommand{\de}{\partial}
\newcommand{\di}{\mathrm{d}}
\newcommand{\R}{{\mathbb{R}}}

\newcommand{\call}[1]{\ensuremath\mathcal{#1}}
\newcommand{\frk}[1]{\ensuremath\mathfrak{#1}}
\newcommand{\scr}[1]{\ensuremath\mathscr{#1}}
\newcommand{\bb}[1]{\ensuremath\mathbb{#1}}

\renewcommand{\bar}[1]{\ensuremath\overline{#1}}

\newcommand{\trn}{\mathsf{T}}
\newcommand\Sym{\scr S}

\newcommand\bs\boldsymbol

\newcommand\mres\llcorner

\makeatletter
\def\namedlabel#1#2{\begingroup
   \def\@currentlabel{#2}%
   \label{#1}\endgroup
}
\makeatother

\begin{document}


\title[Long-time and large-population limits of D-monotone LQ Nash systems]{Long-time and large-population limits of displacement monotone linear-quadratic Nash systems}
\author[D.\ F.\ Redaelli]{Davide Francesco Redaelli}
\address{Department of Mathematics\\ University of Rome Tor Vergata\\ Via della Ricerca Scientifica 1, 00133 Roma, Italy}
\email{redaelli@mat.uniroma2.it}
\thanks{The author is partially supported by the MUR Excellence Department Project Math@TOV, awarded to the Department of Mathematics of the University of Rome Tor Vergata, and by the Gruppo Nazionale per l’Analisi Matematica, la Probabilit\`a e le loro Applicazioni (GNAMPA) of the Istituto Nazionale di Alta Matematica (INdAM). He is also grateful to Marco Cirant for several useful discussions.}



\begin{abstract}
For linear-quadratic $N$-dimensional Nash systems with mean-field-like scaling and strongly displacement monotone data, we prove uniform-in-time a priori derivative estimates exhibiting the expected scaling as $N \to \infty$ in order to pass the system to the limit. As a consequence, in a heterogeneous (i.e., non-symmetric) mean-field setting, we obtain both uniform-in-time convergence of the Nash system on any arbitrarily long horizon $[0,T]$ (as $N \to \infty$), and convergence of both the $N$-dimensional Nash system and the corresponding master equation to their respective ergodic counterparts (as $T \to \infty$).
\end{abstract}

\maketitle


\section{Introduction}

We consider the following Nash system on $[0,T] \times (\R^d)^N$:
\begin{equation} \label{NS}
\begin{dcases}
-\de_t u^i - \Delta u^i + \frac12|D_i u^i|^2 + \sum_{\substack{1 \leq j \leq N \\ j \neq i}} D_j u^j \cdot D_j u^i = Q_{f^i \otimes I_d} 
\\[-5pt]
u^i(T,\cdot) = Q_{g^i \otimes I_d},
\end{dcases}
\qquad i \in \{1,\dots, N\},
\end{equation}
where $f^i, g^i \in \Sym(N)$ (i.e., symmetric $N\times N$ real matrices),  and $Q_A$ denotes the quadratic form associated to the matrix $\frac12 A \in \Sym(Nd)$, namely $Q_A(\bs x) \defeq \frac12 A\bs x \cdot \bs x$ for all $\bs x \in (\R^d)^N$. Our main assumptions on $f^i$ and $g^i$ are of \emph{mean-field-like scaling} and \emph{strong displacement monotonicity}; we refer to Assumptions~\ref{mfl_ass} and \ref{dmc_ass} in \Cref{sec:assmr} for the precise statements.

It is well-known that system~\eqref{NS} describes closed-loop Nash equilibria of the $N$-player linear-quadratic stochastic differential game on the horizon $[0,T]$ with states driven by the $\R^d$-valued SDEs
\[
\di X^i_t = \alpha_t^i \,\di t + \sqrt{2} \,\di W^i_t, \qquad i \in \{1,\dots, N\},
\]
for some independent $d$-dimensional Brownian motions $W^i$, and costs
\[
J^i(\bs\alpha) = \bb E\biggl[\, \int_0^T \Bigl( \frac12 |\alpha^i_t|^2 + Q_{f^i \otimes I_d}(\bs X_t) \Bigr)\,\di t + Q_{g^i \otimes I_d}(\bs X_T) \biggr];
\]
more precisely, closed-loop equilibrium controls of such a game are given by $-D_i u^i(t,\bs X_t)$, with $u^i$ solving \eqref{NS}.

Since Lions's lectures~\cite{LLL}, a central question in mean field game (MFG) theory has been the so-called \emph{convergence problem}, regarding the rigorous study of the large-population (i.e., $N\to\infty$) limit of (closed-loop) Nash equilibria and of the Nash system in the MFG setting. In this direction, the first major breakthrough was the book \cite{CDLL}, which solved the problem for a wide class of Nash systems enjoying the \emph{Lasry--Lions monotonicity} property, through a thorough study of smooth enough solutions to the \emph{master equation}---a PDE of hyperbolic nature on the space of measures, representing the expected limit for the Nash system.
Among the other contributions addressing the convergence problem, under various viewpoints, we recall \cite{CarLC}, \cite{CCP} for games with a major player, \cite{LackAAP,LackLF} for a more probabilistic approach, \cite{MZMem} expanding the study of the master equation, \cite{GMMZ} establishing the \emph{displacement monotone} regime as an alternative to the Lasry--Lions one, \cite{JJT} for interesting advances in the displacement monotone setting, and \cite{Dj} for the context of MFGs of controls (MFGC). 

Recently, a novel approach to the convergence problem in (heterogeneous) MFGs, with either a Lasry--Lions or a displacement monotone structure, have been proposed by Cirant and the author~\cite{CR-CPAM26}, based on a direct analysis of the $N$-dimensional Nash system, rather than on the study of some limit problem. Afterwards, the general strategy of \cite{CR-CPAM26} was improved in \cite{CJR25} to obtain non-asymptotic proximity estimates for different notions of equilibria for stochastic differential games on networks, and in \cite{JM25} to prove similar bounds in the setting of MFGC. 

Nevertheless, all the above works deal, in various forms, with the large-population limit of the Nash system, \emph{on a fixed time horizon $T$}, obtaining and making use of a priori estimates that are far from being uniform in $T$. The purpose of the present work is then to show that the approach of \cite{CR-CPAM26} can be adapted in order to obtain a priori bounds on the linear-quadratic Nash system which are stable with respect to both $T$ and $N$, thus allowing to address both the long-time and the large-population limits of \eqref{NS}. As the core strategy of \cite{CR-CPAM26} was first introduced in \cite{CR-DGA} in a linear-quadratic setting, we regard our results as a first step towards a more general approach to uniform-in-time non-asymptotic estimates for the Nash system.

In fact, the present work also provides, as far as we know, the first uniform-in-time convergence result for the Nash system. Previous papers dealing with long-time asymptotics in MFGs---and focusing on the MFG system---include \cite{Car13,CG15,CLLP1,CLLP2} in the Lasry--Lions monotone setting and \cite{CM25} in the displacement monotone setting (see also references therein), while different regimes are treated, e.g., by \cite{CCDE,CP21}. On the other hand, we mention the recent preprint \cite{CohZell} addressing the convergence problem in an ergodic framework. We point out that considering both the $N\to\infty$ and $T\to\infty$ limits, in general, seems to be challenging, especially due to the elusive nature of the ergodic Nash system---on which, to the best of our knowledge, very few existence results (see \cite{BenFre84,BenFreProc}) and no uniqueness results are available---and to the possibility for non-uniqueness phenomena to occur in the Lasry--Lions monotone scenario \cite{CarRai} which prevent a priori the attainability of uniform-in-time convergence estimates. 

Concerning the literature about linear-quadratic games, the convergence problem is studied with open-loop strategies, for instance, by \cite{HuaZ}, \cite{LMWZ} in a non-monotone setting with common noise, \cite{CecDia} in a non-uniqueness scenario establishing a selection principle, while \cite{CohJia} compares open-loop and closed-loop equilibria, and \cite{BarPri,PriDGA} deal with ergodic costs; long-time analysis, with open-loop strategies, is carried out for standard and graphon MFGs, with and without common noise, in the recent works \cite{BayJiaG,BayJia25,BayJia26}.

Finally, we point out that some of our structural assumptions may be relaxed to accomodate wider settings, with minor modifications in the proofs: it is possible to consider more general linear-quadratic games, such as
\[ \begin{gathered}
\di X^i_t = (A^iX^i_t - \alpha_t^i) \,\di t + \sigma^i \,\di W^i_t, 
\\
J^i(\bs\alpha) = \bb E\biggl[\, \int_0^T \Bigl( Q_{R^i}(\alpha^i_t) + Q_{f^i \otimes I_d}(\bs X_t-\bar{\bs x}_0) \Bigr)\,\di t + Q_{g^i \otimes I_d}(\bs X_T-\bar{\bs x}_0) \biggr],
\end{gathered}\]
for suitable matrices $A^i$, $\sigma^i$, $R^i > 0$, and arbitrary $\bar{\bs x}_0 \in (\R^d)^N$; also, the assumption of mean-field-like scaling can be relaxed, to include a wider class of games on sparser networks, in the same spirit as \cite{CJR25}. Such extensions will be addressed in an upcoming version of this work.

\section{Assumptions and outline of the main results} \label{sec:assmr}

We assume \emph{mean-field-like} interactions between players, and, to achieve uniform-in-time bounds, we consider \emph{strongly displacement monotone} costs; this is encoded in the next two main assumptions.

\begin{namedass}[(MF)] \namedlabel{mfl_ass}{\textbf{(MF)}}
For $\star \in \{f,g\}$, there exists a constant $ C_\star > 0$, independent of $N$, such that
\begin{equation} \label{eq_mfl_ass}
\sup_{1 \leq i \leq N} \biggl( \sum_{\substack{1 \leq k \leq N \\ k \neq i}} \Bigl( | \star^k_{ki} |^2 + | \star^i_{ik} |^2 \Bigr) + \frac{|\star^i_{ii} |^2}{N} \biggr) + \sum_{\substack{1 \leq i,h,k \leq N\\ h,k \neq i}} | \star^i_{hk} |^2 \leq \frac{ C_\star}{N}.
\end{equation}
\end{namedass}

\begin{rmk}
The bound \eqref{eq_mfl_ass} is a stronger version of \cite[Condition~(3.2)]{CR-DGA}. The terminology \emph{mean-field-like} is motivated by the fact that any $f$ such that
\begin{equation} \label{mflmot}
\sup_i |f^i_{ii}| + \sup_{j \neq i} N |f^i_{ij}| + \sup_{j \neq i \neq k} N^2|f^i_{jk}| \leq C,
\end{equation}
with $C$ independent of $N$, in fact satisfies \eqref{eq_mfl_ass}; and, in turn, \eqref{mflmot} is fulfilled (with $f^i_{jk} = D_{jk} F^i$) whenever $F^i(x) = \call F^i(x^i,\frac1{N-1} \sum_{j \neq i} \delta_{x^j})$ for any smooth enough function $\call F^i$ on $\R^d \times \call P(\R^d)$---see \cite[Proposition~6.1.1]{CDLL} or \cite[Remark~3.6]{CR-CPAM26}.
\end{rmk}

\begin{namedass}[(SD)] \namedlabel{dmc_ass}{\textbf{(SD)}}
For $\star \in \{f,g\}$, there exists a constant $M_\star > 0$, independent of $N$, such that
\begin{equation} \label{eq_dmcass}
B(\star) \bs x \cdot \bs x \geq M_\star |\bs x|^2 \qquad \forall\, \bs x \in \R^N,
\end{equation}
where $B(\star)_{ij} \defeq \star^i_{ij}$.
\end{namedass}

\begin{rmk}[Notation]
Even though the matrix $B(\star)$ is not symmetric in general, we will equivalently be writing $B(\star) \geq M_\star I_N$ as a compact form of \eqref{eq_dmcass}.
\end{rmk}

\begin{rmk}
Condition~\eqref{eq_dmcass} (with $\star = f$) is equivalent to
\[
\sum_{1 \leq i \leq N} \bigl( D_{x^i} Q_{f^i \otimes I_d}(\bs x) - D_{x^i} Q_{f^i \otimes I_d}(\bs y) \bigr) \cdot (x^i - y^i) \geq M_f |\bs x - \bs y|^2 \qquad \forall\, \bs x, \bs y \in (\R^d)^N;
\]
hence, following \cite[Remark~3.5]{CR-CPAM26}, it is the $N$-dimensional (linear-quadratic) counterpart of the following strong displacement monotonicity of $\call F \colon \R^d \times \call P_2(\R^d) \to \R$:
\[
\int_{\R^d \times \R^d} \bigl( D_x\call F(x,m_1) - D_x\call F(y,m_2) \bigr) \cdot (x - y) \,\mu(\di x,\di y) \geq M_f \int_{\R^d \times \R^d} |x-y|^2 \,\mu(\di x,\di y),
\]
for all $m_1,m_2 \in \call P_2(\R^d)$ and $\mu$ having marginals $m_1$ and $m_2$.
\end{rmk}

In order to exploit the linear-quadratic structure of the problem, we make the usual ansatz that the value function of the game has the form
\begin{equation} \label{LQ_ansatz}
u^i(t,\bs x) = Q_{c^i(t) \otimes I_d}(\bs x) + d \int_t^T \tr c^i(s) \,\di s, \qquad \bs x \in (\R^d)^N,
\end{equation} 
for some $c^i \colon [0,T] \to \Sym(N)$. Denoting by $e_i$ the $i$-th vector of the canonical basis of $\R^N$, straightforward computations show that the Nash system~\eqref{NS} can be recast into the following system of Riccati-type equations on $[0,T]$:
\begin{equation} \label{MF_sys}
 \begin{dcases}
- \dot c^i - (c^i e_i)^{\otimes 2} + B(c)^\trn c^i + c^i B(c) = f^i
\\
c^i(T) = g^i,
\end{dcases}
\qquad i \in \{1,\dots,N\}.
\end{equation}

Our first main theorem is the following.

\begin{thm} \label{mainthm}
Let Assumptions \ref{mfl_ass} and \ref{dmc_ass} be in force. Then there exist $N_0 \in \N$ and $C, M > 0$ (all depending only on $C_\star$ and $M_\star$, $\star \in \{f,g\}$) such that the following hold for all $N \geq N_0$.
\begin{enumerate}[leftmargin=*,label=(\alph*)]
\item For any $T > 0$, there exists a unique classical solution $\bs u^T$ of the form \eqref{LQ_ansatz} to the Nash system~\eqref{NS}, such that \eqref{eq_mfl_ass} and \eqref{eq_dmcass} holds for $\star = c^T$ with $(C_{c^T},M_{c^T}) = (C,M)$.
\item There exists $(v^i,\gamma^i)_{i = 1,\dots,N}$ and a constant $K > 0$ such that, for all $T>0$, $\eta \in (0,1)$, $i \in \{1,\dots,N\}$, $k \in \N$,\footnote{Note that as the functions considered are quadratic in space, the only relevant values of $k$ are actually $0$ and $1$.} 
and $(t,\bs x) \in [0,\eta T] \times (\R^d)^N$,
\begin{equation} \label{uiviconv}
\begin{split}
\frac{\bigl| u^{T,i}(t,\bs x) - ( v^i(\bs x) + (T-t) \gamma^i ) \bigr|^2}{1+|x^i|^4 + \frac1{N^2}|\bs x^{-i}|^{4}} 
&+ \frac{\bigl| D^k D_i \bigl(u^{T,i}(t,\bs x) - v^i(\bs x) \bigr) \bigr|^2}{1+|x^i|^{2(1-k)} + \frac1N |\bs x^{-i}|^{2(1-k)}} 
\\
&+ \sum_{\substack{1 \leq j \leq N \\ j \neq i}} \frac{N \bigl| D^k D_j \bigl(u^{T,i}(t,\bs x) - v^i(\bs x) \bigr) \bigr|^2}{1+|x^i|^{2(1-k)} + |\bs x^{-i}|^{2(1-k)}} 
\leq K e^{-(1-\eta)M T}.
\end{split}
\end{equation}
Moreover, the $N$-tuple of couples $(v^i,\gamma^i)_{i = 1,\dots,N}$ is the unique solution to the ergodic Nash system
\[
\gamma^i - \Delta v^i + \frac12|D_iv^i|^2 + \sum_{j \neq i} D_j v^j \cdot D_j v^i = Q_{f^i \otimes I_d}, \qquad i \in \{1,\dots,N\}
\]
with each $v^i$ quadratic and having Hessian $\frk c^i$ such that \eqref{eq_mfl_ass} and \eqref{eq_dmcass} hold for $\star = \frk c$ with $(C_{\frk c}, M_{\frk c}) = (C,M)$.
\end{enumerate}
\end{thm}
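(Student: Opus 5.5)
The plan is to work entirely at the level of the Riccati system \eqref{MF_sys}, reversing time by setting $s = T-t$ so that it becomes a forward ODE $\dot c^i = (c^ie_i)^{\otimes 2} - B(c)^\trn c^i - c^iB(c) + f^i$ with $c^i(0)=g^i$. The goal for part (a) is a priori estimates, uniform in $s$ and $N$, showing that the set
\[
\call K_{C,M} \defeq \{ c : \eqref{eq_mfl_ass} \text{ and } \eqref{eq_dmcass} \text{ hold for } \star=c \text{ with constants } (C,M)\}
\]
is forward invariant for suitable $C \geq C_f\vee C_g$ and $M \leq M_f\wedge M_g$. Local existence and uniqueness of the ODE are standard. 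Invariance then gives global existence on any $[0,T]$, and uniqueness in the class of quadratic solutions follows from ODE uniqueness.

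\textbf{Monotonicity bound.} Differentiating the matrix $B(c)$ along the flow gives $\frac{\di}{\di s}B(c) = -B(c)^\trn\text{-type terms} - B(c)B(c) + (\text{terms involving } c^i_{hk},\ h,k\neq i) + B(f)$. Testing with $\bs x$, the quadratic term $-|B(c)\bs x|^2$-like contribution is handled as follows. The leading structure should give a differential inequality
\[
\frac{\di}{\di s} (B\bs x\cdot\bs x) \geq M_f|\bs x|^2 - 2\|B\|\,(B\bs x\cdot\bs x) - \text{(error)},
\]
where the error involves $\sum_{i}\sum_{h,k\neq i}$ entries and the off-diagonal $c^i_{ik}$. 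By \ref{mfl_ass}-type smallness, this error is $O(N^{-1/2})$ in operator norm. A comparison/Gronwall argument on the minimal eigenvalue of the symmetric part then keeps it above some $M>0$, once $N\geq N_0$ absorbs the errors.

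\textbf{Scaling bound.} For the \ref{mfl_ass} bound, each family of entries ($c^i_{ii}$, $c^i_{ik}$, $c^k_{ki}$, $c^i_{hk}$) satisfies a linear ODE driven by $B(c)$ plus quadratic couplings. Energy estimates using the coercivity of $B(c)$ (the damping $-B^\trn c^i - c^iB$) give exponential decay of the homogeneous part. The quadratic couplings are then controlled by the bounds themselves times $N^{-1/2}$ factors, closing a bootstrap for $N\geq N_0$. This coupled bootstrap, in which monotonicity needs scaling and scaling needs monotonicity uniformly in time, is the main obstacle. I would first try to prove both simultaneously via a continuity argument on the maximal time of invariance with slightly enlarged constants.

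\textbf{Part (b).} I would exploit the contraction. Take two solutions $c, \tilde c$ of the forward flow with data in $\call K_{C,M}$ and the same $f$, and set $w = c-\tilde c$. Then $w$ solves a linear equation whose generator is dissipative with rate $M$, up to $O(N^{-1/2})$ errors, in the weighted norm appearing in \eqref{uiviconv}. That norm gives weight $1$ to the $i$-th row, $N$ to off-diagonal rows, and $N^2$ to the $(h,k)$ block, matching the scaling of \eqref{eq_mfl_ass}. So $\|w(s)\|\leq K e^{-Ms}\|w(0)\|$. Applying this with $\tilde c(\cdot)=c(\cdot+\tau)$ shows that $c(s)$ is Cauchy as $s\to\infty$, with limit $\frk c\in\call K_{C,M}$, which is a stationary solution. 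Uniqueness of stationary solutions in $\call K_{C,M}$ follows from the same contraction. Then $v^i=Q_{\frk c^i\otimes I_d}$ and $\gamma^i=d\,\tr\frk c^i$ solve the ergodic system, since the stationary Riccati equation is exactly the Hessian identity and the constant term gives $\gamma^i$.

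\textbf{Convergence rates.} For $t\leq\eta T$ we have $s=T-t\geq(1-\eta)T$, so $|c^{T,i}(t)-\frk c^i|\lesssim e^{-M(1-\eta)T}$ in the weighted norm. This yields the derivative terms with $k=0,1$ after evaluating $D_j Q$ and $D^2Q$ against the stated weights. For the zeroth-order term, write
\[
u^{T,i}-v^i-(T-t)\gamma^i = Q_{(c^i-\frk c^i)\otimes I_d} + d\int_t^T \tr\bigl(c^i-\frk c^i\bigr)\,\di s.
\]
Here the integral converges thanks to the exponential decay, and is bounded by the same rate after adjusting $K$.
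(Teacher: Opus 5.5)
Your architecture is the paper's: reduce to \eqref{MF_sys}, propagate the (MF) and (SD) bounds by a continuity argument, prove an exponential stability estimate, apply it to time-shifted solutions to get a Cauchy limit $\frk c$ solving \eqref{stat_Ric} (unique by the same estimate), and set $v^i = Q_{\frk c^i\otimes I_d}$, $\gamma^i = d\tr\frk c^i$. The step that fails is the last one, the zeroth-order term.

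Write $\hat c(\sigma) \defeq c^T(T-\sigma)$. This does not depend on $T$, and $\hat c(0)=g$. Then $d\int_t^T \tr(c^{T,i}(s)-\frk c^i)\,\di s = d\int_0^{T-t}\tr(\hat c^i(\sigma)-\frk c^i)\,\di\sigma$. The integrand is of order one near $\sigma=0$, so the integral tends to a constant that is in general nonzero, not to $0$. Concretely, take decoupled players $f^i = e_i\otimes e_i$, $g^i = 2e_i\otimes e_i$, which satisfy both assumptions. Then $c^{T,i}(t)=\coth(T-t+\tfrac12\ln 3)\,e_i\otimes e_i$, $\frk c^i = e_i\otimes e_i$, and $u^{T,i}(t,0)-(T-t)\gamma^i\to d\ln\frac32$. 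So the zeroth-order part of \eqref{uiviconv} is false for $v^i=Q_{\frk c^i\otimes I_d}$.

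The fix is to replace $v^i$ by $v^i+\kappa^i$, with $\kappa^i\defeq d\int_0^\infty\tr(\hat c^i(\sigma)-\frk c^i)\,\di\sigma$. This constant is finite by the exponential decay, and the shift is admissible because the ergodic system determines $v^i$ only up to constants; the uniqueness claim must then be read modulo constants. The remainder becomes a tail over $[T-t,\infty)$ of size $O(e^{-M(T-t)/2})$, and your argument goes through. The paper's one-line deduction from \Cref{MF_thm,convthm} has the same omission.

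Relatedly, the quadratic part of that term does not fit the weight $\frac1{N^2}|\bs x^{-i}|^4$. For fixed $i$, \eqref{eq_mfl_ass} for $c^T-\frk c$ only makes the block $((c^{T,i}-\frk c^i)_{hk})_{h,k\neq i}$ of order $N^{-1/2}$ in Frobenius norm. That yields the weight $\frac1N|\bs x^{-i}|^4$, and this is attained: add $N^{-1/2}e_2\otimes e_2$ to $f^1$ above and let $\bs x = Re_2$, $R\to\infty$.

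Some earlier claims also need repair, though the architecture survives.

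\emph{Part (b).} The linearised flow is not dissipative up to $O(N^{-1/2})$ errors in your $N$-weighted norm. In the equation for $(c-\bar c)^i_{ik}$, $k\neq i$, the terms $c^i_{ik}\delta_k$ and $\bar c^i_{ik}\delta_i$ (with $\delta_j=(c-\bar c)^j_{jj}$) inject the diagonal differences with an $O(1)$ coefficient relative to the scaling. The feedback into $\bs\delta$ is only $O(1/N)$, so a large constant weight on $\bs\delta$ repairs this. The paper instead first contracts the unweighted norm $|c-\bar c|^2$, where all cross terms are small, and then recovers the scaled bounds block by block.

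\emph{Part (a).} The source $\sum_i(\sum_{h\neq i}|c^i_{ih})|^2)^2$ in the $(h,k)$-block equation is of the same order as the target bound. The bootstrap therefore closes only in the hierarchical order of \Cref{prop_est2,prop_est3}.

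\emph{Monotonicity inequality.} Your inequality for $B(c)\bs x\cdot\bs x$ is correct, since $-B^2\bs x\cdot\bs x=-|S\bs x|^2+|A\bs x|^2$ for $B=S+A$ (symmetric plus antisymmetric part). But closing it through $\sup\|B\|$ makes the choice of $M$ circular. The off-diagonal part of $B(c)$ has operator norm of order one (think of $\frac aN(\bs 1\bs 1^\trn-I_N)$), controlled only by the $M$-dependent scaling constant. The bound $B(c)\le(K'+\tilde K)I_N$ with $\tilde K=\sqrt{K_M/N}$ in the proof of \Cref{prop_prpg} understates this as well. Instead, test at a unit eigenvector for $\mu=\lambda_{\min}(S)$. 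This gives $\frac{\di\mu}{\di s}\ge M_f-\mu^2-\|E\|$ in the Dini sense, with $E$ as in \eqref{MF_eqB} and $\|E\|=O(N^{-1/2})$. Hence $B(c)\ge\min\{M_g,\sqrt{M_f-\sup\|E\|}\}$, with no upper bound on $B(c)$ needed.
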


In fact, \Cref{mainthm} is obtained by combining the following two results, which reformulate points (a) and (b), respectively, at the level of the Riccati system \eqref{MF_sys}.

\begin{thm} \label{MF_thm}
Let Assumptions \ref{mfl_ass} and \ref{dmc_ass} be in force. There are $N_0 \in \N$ and $C,M >0$ such that, for all $N \geq N_0$ and for any $T > 0$, there exists a unique absolutely continuous solution $c^T$ to \eqref{MF_sys} on $[0,T]$, such that \eqref{eq_mfl_ass} and \eqref{eq_dmcass} hold for $\star = c^T$ with $(C_{c^T},M_{c^T}) = (C,M)$.
\end{thm}

This is an existence a uniqueness result for \eqref{MF_sys} on arbitrarily long horizons, also providing uniform-in-time derivative estimates with the correct scaling to pass to the $N\to\infty$ limit. The strategy is similar to that used to prove \cite[Theorem~3.1]{CR-DGA}, with the crucial difference that the positivity of $M_\star$ in Assumption~\ref{dmc_ass} is to be exploited to obtain long-time stability of the a priori bounds on $c = c^T$ (that is, on $D^2 \bs u = c \otimes I_d$).

In particular, we will follow the key idea in \cite{CR-DGA} of \emph{propagating a monotonicity bound} on $c$: we will start by \emph{assuming} to have a solution $c$ that is $M$-strongly displacement monotone (in the sense that \eqref{eq_dmcass} holds for $\star = c$ and $M_\star = M$) in a left neighbourhood of $T$, in order to obtain a priori bounds on $c$, stable with respect to $T$ and scaling according to \eqref{eq_mfl_ass}; then we will use those a priori estimates and the local solvability of \eqref{MF_sys} to infer that $c$ is in fact $2M$-strongly displacement monotone in the same neighbourhood of $T$, provided that $N$ is large enough (and that $M$ is suitably chosen with respect to the data), which eventually allows to infer that $c$ preserves the displacement monotone nature of $g$ from time $T$ up to $0$.

This time, we will need to simultaneously propagate another control on $c$ (see \eqref{extracond_loop}), as the a priori information on the displacement monotonicity of $c$ alone seems not to be enough to obtain all needed bounds. This also showcases one important flexibility of our method: roughly speaking, any number of the estimates one needs to prove can be put in this loop of improvement and propagations of bounds, as long as the circle eventually closes---that is, as long as the \emph{proved} a priori bounds in fact improve those \emph{assumed} estimates actually when $N$ is large enough.

Point (b) of \Cref{mainthm}, instead, is restated as follows.

\begin{thm} \label{convthm}
Let Assumptions \ref{mfl_ass} and \ref{dmc_ass} be in force, and, for any $T>0$, let $c^T$ be the solution from \Cref{MF_thm}. There exists $N_0' \in \N$ and $K>0$ (both independent of $T$) such that, if $N \geq N_0'$, then inequality \eqref{eq_mfl_ass} holds for $\star = c^T(t) - c^{T'\!}(t)$ with
\begin{equation} \label{convthmest}
\sup_{T' > T} \sup_{t \leq \eta T} C_{c^T(t)-c^{T'\!}(t)} \leq K e^{-(1-\eta)MT} \qquad \forall \eta \in (0,1).
\end{equation}
In particular, for $N \geq N_0'$, the net $\{c^T\}_{T>0}$ converges locally uniformly on $[0,+\infty)$  as $T \to +\infty$, to the unique solution $\frk c \in \Sym(N)^N$ to
\begin{equation} \label{stat_Ric}
 - (\frk c^i e_i)^{\otimes 2} + B(\frk c)^\trn \frk c^i + \frk c^i B(\frk c) = f^i, \qquad i \in \{1,\dots,N\}
\end{equation}
which satisfies \eqref{eq_mfl_ass} and \eqref{eq_dmcass} with $\star = \frk c$ and $(C_{\frk c},M_{\frk c}) = (C,M)$.
\end{thm}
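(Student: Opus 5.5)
The plan is to study the difference of two solutions on the common interval and show that it solves a linear backward equation with a dissipative drift. Fix $T<T'$ and set $w^i(t)\defeq c^{T'\!,i}(t)-c^{T,i}(t)$ for $t\in[0,T]$. Subtracting the two copies of \eqref{MF_sys} and writing $c=c^T$, $\tilde c=c^{T'}$, each $w^i$ solves
\[
-\dot w^i - \bigl(w^ie_i\otimes c^ie_i + \tilde c^ie_i\otimes w^ie_i\bigr) + B(\tilde c)^\trn w^i + w^iB(\tilde c) + B(w)^\trn c^i + c^iB(w) = 0 ,
\]
with terminal datum $w^i(T)=c^{T'\!,i}(T)-g^i$.

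\textbf{Step 1: size of the terminal datum.} By \Cref{MF_thm}, both $c^{T'}(T)$ and $g$ satisfy \eqref{eq_mfl_ass}, with constants $C$ and $C_g$ respectively. Hence $C_{w(T)}\le 2(C+C_g)$, uniformly in $T'$.

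\textbf{Step 2: exponential decay of the mean-field-like norm.} I will use the weighted quantity
\[
\Phi(t)\defeq \sup_i\Bigl(\sum_{k\neq i}(|w^k_{ki}|^2+|w^i_{ik}|^2)+\tfrac1N|w^i_{ii}|^2\Bigr)+\sum_{h,k\neq i}|w^i_{hk}|^2 ,
\]
and aim for the differential inequality
\[
-\dot\Phi \le -2M\Phi + \tfrac{C'}{\sqrt N}\Phi .
\]
The key ingredient is a decay estimate for the diagonal block. Pairing the equation for the entries $w^i_{ij}$, that is for $B(w)$, with itself, the terms $B(w)^\trn B(\tilde c)+\dots$ produce dissipation. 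This comes from the strong monotonicity $B(\tilde c),B(c)\ge M I_N$ granted by \Cref{MF_thm}, together with the transport structure $B(\tilde c)^\trn w^i+w^iB(\tilde c)$. All the remaining cross terms involve off-diagonal entries of $c,\tilde c$ with $h,k\neq i$, which are $O(N^{-1/2})$ by \eqref{eq_mfl_ass}. Arguing as in the a priori estimates of \Cref{MF_thm}, I expect this to give
\[
\frac{\di}{\di t}\bigl(-|B(w)|^2\bigr)\le -2M|B(w)|^2 + O(N^{-1/2})\Phi .
\]
The remaining entries $w^i_{hk}$ are treated by Duhamel along the linear flow generated by $B(\tilde c)$ and $B(c)$. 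Since both matrices are $M$-monotone, this flow contracts like $e^{-M(T-t)}$, and the forcing is controlled by $B(w)$ times the small entries of $c$. Taking $N\ge N_0'$ so that $C'/\sqrt N\le M(1-\eta_0)$ should let Gronwall yield $\Phi(t)\le K e^{-M(T-t)}$, possibly with $M$ slightly reduced, which I absorb by renaming $M$.

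\textbf{Step 3: conclusion.} For $t\le\eta T$ we have $T-t\ge(1-\eta)T$, so $\Phi(t)\le K e^{-(1-\eta)MT}$; this is \eqref{convthmest}. The bound makes $\{c^T(t)\}$ Cauchy as $T\to\infty$, uniformly on compacts, since each compact $[0,t_0]$ lies in $[0,\eta T]$ once $T$ is large. Let $\frk c$ denote the limit. Passing to the limit in the integrated form of \eqref{MF_sys} on $[t,t+1]$ shows that the limit is independent of $t$ (because $c^T(t)$ and $c^T(t+1)$ converge to the same object) and that $\dot c^T\to0$. Hence $\frk c$ solves \eqref{stat_Ric}, and it inherits the bounds \eqref{eq_mfl_ass} and \eqref{eq_dmcass} with $(C,M)$ since both are closed conditions. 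For uniqueness, take any other stationary solution $\frk c'$ satisfying these bounds. It is a solution of \eqref{MF_sys} on every $[0,T]$ with $g=\frk c'$, so Step 2 applied to the pair $(\frk c,\frk c')$ gives $C_{\frk c-\frk c'}\le Ke^{-MT}$ for all $T$, hence $\frk c=\frk c'$.

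\textbf{Main obstacle.} The hard step is the differential inequality in Step 2, namely making the nonsymmetric quadratic coupling terms close at the correct $N$-scaling with a uniform negative drift. If a single energy does not close, I will run a bootstrap on the two scales separately: first the $B(w)$ part, then the full $\Phi$. This mirrors the propagation loop used for \Cref{MF_thm}.
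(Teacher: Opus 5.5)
There is a genuine gap in Step~2. Your overall architecture is the paper's. It is a stability estimate for two solutions of \eqref{MF_sys} with different terminal data (this is \Cref{prop_cctaudist}, used through \eqref{limlimcorr}), with decay driven by $B(c),B(\tilde c)\ge MI_N$. It is followed by the Cauchy argument, the stationary limit and uniqueness, as in your Step~3.

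\textbf{The failing step.} The inequality $-\dot\Phi\le-2M\Phi+\frac{C'}{\sqrt N}\Phi$ fails. The cross terms that are not small at the scale of $\Phi$ are not those involving $c^i_{hk}$ with $h,k\ne i$.

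In the equation for $w^i_{ik}$, $k\ne i$, the products $B(c)B(w)+B(w)B(\tilde c)$ contain the sources $c^i_{ik}w^k_{kk}+w^i_{ii}\tilde c^i_{ik}$. These are the terms $\delta_kc^i_{ik}+\delta_i\bar c^i_{ik}$ of \eqref{est1_cD}. Your normalisation only gives $|w^k_{kk}|^2\le N\Phi$, and the diagonal really is of order one at $t=T$. Since $\sum_{k\ne i}|c^i_{ik}|^2\le C/N$, the sum $\sum_{k\ne i}|c^i_{ik}w^k_{kk}|^2$ is of order $C\Phi$, the same order as $\sum_{k\ne i}|w^i_{ik}|^2$.

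Likewise, $\sum_{j\ne i,k}c^i_{ij}w^j_{jk}$ mixes different rows of $B(w)$. When $c^i_{ij}\sim 1/N$, the off-diagonal part of $B(c)$ has operator norm of order one, and a row-by-row energy cannot use the monotonicity of $B(c)$ to absorb it. Both effects contribute $O(\sqrt C)\,\Phi$ rather than $O(N^{-1/2})\Phi$, and nothing makes $C$ small compared with $M^2$.

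In fact the inequality is false under your hypotheses. Take $c\equiv\frk c$ stationary with $B(\frk c)=aI_N+\frac\beta N(\bs 1\bs 1^\trn-I_N)$, $a-\frac\beta N=M$, and $B(w(T))=\epsilon I_N$. To leading order (linearising in $\epsilon$, dropping $O(1/N)$ terms) one gets $B(w(t))=\epsilon e^{-2(T-t)B(\frk c)}$. Hence $\Phi(t)\ge\Phi(T)e^{-4M(T-t)}\bigl[1+2(1-e^{-2\beta(T-t)})^2\bigr]$. This exceeds $\Phi(T)e^{-2M(T-t)}$ at $T-t=\frac{\ln 2}{2\beta}$ once $\beta\ge 2M$ and $N$ is large. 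So a prefactor $K=K(C,M)>1$, as in \Cref{prop_cctaudist}, is unavoidable, and no Gronwall inequality for $\Phi$ alone produces it. Also, $M$ cannot be ``renamed'': it is the constant of \Cref{MF_thm} appearing in \eqref{convthmest}.

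\textbf{What is missing.} The two scales must be handled in the opposite order, using monotonicity only globally, through traces.

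First estimate the full Frobenius energy $|w|^2=|\check\Delta|^2+|D|^2+|\bs\delta|^2$. Here $\check\Delta$ and $\bs\delta$ are the off-diagonal and diagonal parts of $B(w)$, and $D$ collects the entries $w^i_{hk}$, $h,k\ne i$. In this energy the row-mixing terms take the form $\sum_k v_k^\trn B(c)v_k\ge M|\check\Delta|^2$, with $v_k$ the columns of $\check\Delta$. Every cross term is at most $\frac{C}{NM}$ times another block, e.g.\ $\frac{C}{2MN}|\bs\delta|^2$ against the dissipation $M|\bs\delta|^2$. This works because $\bs\delta$ is now measured in $\ell^2$, not with weight $\frac1N$.

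The result is $|w(t)|^2\le 3N\,C_{w(T)}e^{-(2-\epsilon)M(T-t)}$, which is off by a factor $N$ from the mean-field scaling. Your intermediate inequality coupling $|B(w)|^2$ to $\Phi$ hides exactly this loss, since $|B(w)|^2$ can be as large as $N^2\Phi$.

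Only then return to the individual blocks:
\begin{enumerate}[label=(\roman*)]
\item $|\check\Delta|^2$;
\item each $|\delta_i|^2$;
\item $|D|^2$, with $\sup_i|\delta_i|^2$ in place of $|\bs\delta|^2$;
\item finally the row and column sums.
\end{enumerate}
At this stage all coupling terms are known forcing terms of the correct size. This recovers $C_{w(t)}\lesssim C_{w(T)}e^{-(2-4\epsilon)M(T-t)}$, with a rate well above the $M$ required.

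Your fallback ``bootstrap on two scales'' points this way but does not identify this mechanism. With Step~2 replaced by it, your Step~3 goes through essentially as written.
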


Then, it is clear that, with Theorems~\ref{MF_thm} and \ref{convthm} at hand, it suffices to let
\begin{equation*} 
v^i(\bs x) \defeq Q_{\frk c^i \otimes I_d}(\bs x), \qquad \gamma^i \defeq d\tr \frk c^i = \tr(\frk c^i \otimes I_d) = \Delta v^i
\end{equation*}
to see that \Cref{mainthm} holds.

We note that the bound~\eqref{convthmest} is strictly related to exponential turnpike estimates implying long-time stabilisation of solutions to MFG systems; the reader can have a look at \cite{CM25}, and references therein. By a straightforward translation argument, under the assumptions of \Cref{MF_thm}, for all $N \geq N_0$ and for any $T>0$, the unique solution therein extends to the unique solution to \eqref{MF_sys} on $(-\infty,T]$, also preserving the constants $C$ and $M$. Therefore, the long-time convergence estimate \eqref{convthmest} for the net $\{ c^T \}_{T > 0}$ can be rephrased as a $t\to-\infty$  convergence estimate for a single such extended solution, instead; indeed, for any $T' > 0$,
\[
c^{T'\!}(t) = c^{T}(T-T'+t) \quad \forall \, t \leq T',
\]
so 
\begin{equation} \label{limlimcorr}
\sup_{T' > T} \sup_{t \leq \eta T} \bigl| c^T(t) - c^{T'\!}(t) \bigr| = \sup_{\tau>0} \sup_{t \leq \eta T} \bigl|c^T(t) - c^T(t-\tau)\bigr|
\end{equation}
for any $\eta \in (0,1)$. This is the point of view we are going to adopt to prove \Cref{convthm}, which then will follow from the next stability estimate.

\begin{prop} \label{prop_cctaudist}
Let $f(t),g,\bar g \in \Sym(N)^N$ satisfy Assumptions \ref{mfl_ass} and \ref{dmc_ass}, and let $c$ and $\bar c$ be the solutions to \eqref{MF_sys} on $(-\infty,T]$, with terminal conditions $g$ and $\bar g$, respectively. Then, for each $t \leq T$, inequality \eqref{eq_mfl_ass} holds for $\star = c(t) - \bar c(t)$ with
\[
C_{c(t)-\bar c(t)} = C_{g-\bar g} K e^{-M(T-t)},
\]
where $K$ depends only on $\max\{C_c, C_{\bar c}\}$ and $\min\{M_c, M_{\bar c}\}$.
\end{prop}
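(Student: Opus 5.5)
Set $w^i \defeq c^i - \bar c^i$ and write the linear system it solves, obtained by subtracting the two copies of \eqref{MF_sys}:
\[
-\dot w^i - \bigl(c^ie_i\otimes w^ie_i + w^ie_i\otimes \bar c^ie_i\bigr) + B(c)^\trn w^i + w^iB(c) + B(w)^\trn \bar c^i + \bar c^i B(w) = 0,\qquad w^i(T)=g^i-\bar g^i .
\]
This is a linear backward ODE on $(-\infty,T]$, with coefficients built from $c,\bar c$. The hypotheses give uniform control of these coefficients: $c$ and $\bar c$ satisfy \eqref{eq_mfl_ass} with constant $\max\{C_c,C_{\bar c}\}$ and \eqref{eq_dmcass} with $\min\{M_c,M_{\bar c}\}$. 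The plan is to run a weighted energy estimate on the quantity
\[
E(t)\defeq \sup_i\Bigl(\sum_{k\neq i}(|w^k_{ki}|^2+|w^i_{ik}|^2)+\tfrac1N|w^i_{ii}|^2\Bigr)+\sum_{i}\sum_{h,k\neq i}|w^i_{hk}|^2 .
\]
We expect a differential inequality $-\dot E\le -2M E + (\text{error})$, and Gronwall then yields $E(t)\le K E(T)e^{-M(T-t)}$.

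The first step is to isolate the dissipative part. Consider the "diagonal-block'' vector $B(w)\in\R^{N\times N}$, i.e. the rows $w^i e_i$. Multiply the $i$-th equation by $e_i$ and pair with $w^ie_i$. The dominant terms are $B(c)^\trn w^i e_i$ and the $B(w)$-terms. Summed over $i$, they produce $B(c)$ and $B(\bar c)$ acting on $B(w)$, and \eqref{eq_dmcass} for these gives a coercive contribution $\ge M|B(w)x|^2$-type. This is exactly the backward-in-time decay mechanism: $\frac{d}{dt}|B(w)|^2\ge 2M|B(w)|^2-\text{errors}$.

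The remaining terms must be shown to be lower order in $N$, and this is where \eqref{eq_mfl_ass} enters. Terms such as $c^ie_i\otimes w^ie_i$ or the off-diagonal entries $c^i_{hk}$ pair with $w$ only through factors of size $O(N^{-1/2})$ or $O(N^{-1})$ by Cauchy--Schwarz. They are therefore bounded by $\frac{C}{\sqrt N}E$ or absorbed using the diagonal coercivity. The parts of $E$ that are not controlled by $B(w)$, namely $w^i_{hk}$ with $h,k\ne i$ and $w^k_{ki}$ versus $w^i_{ik}$, are handled as in the proof of \Cref{MF_thm}. Their equations are linear and forced by $B(w)$, $B(c)$ and $B(\bar c)$, with a damping term $B(c)^\trn w^i+w^iB(c)$ that again is coercive by \eqref{eq_dmcass}. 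We then estimate them separately: each decays at rate $2M$ up to forcing by the already controlled $B(w)$, and the forcing carries the right $N$-scaling.

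Combining these steps gives a closed inequality $-\dot E\le -(2M - C/\sqrt N)E$. For $N$ large the rate is at least $M$, and Gronwall on $[t,T]$ yields $E(t)\le K\,C_{g-\bar g}\,e^{-M(T-t)}$. This is the claimed bound with $K$ depending only on the stated constants. Note that $f(t)$ disappears from the difference equation, so the $t$-dependence of $f$ is harmless.

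The main obstacle is the cross-coupling in the $w^i_{hk}$ block. The term $B(w)^\trn\bar c^i$ couples all players, and the naive bound loses a factor $N$. We will try to control it using $\sum_{h,k\ne i}|\bar c^i_{hk}|^2\le C/N$ together with $\sup_i\sum_k|w^k_{ki}|^2$. If this fails to close, the fallback is to weight the components of $E$ with different $N$-dependent constants, chosen so that the error terms are absorbed.
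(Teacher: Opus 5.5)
\paragraph{Verdict.} The proposal has a genuine gap. Your central claim is that every non-coercive term is $O(N^{-1/2})E$, so that $-\dot E\le-(2M-C/\sqrt N)E$ closes. This is false, because $E$ weights $|w^i_{ii}|^2$ by $1/N$ while these entries are $O(1)$.

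\paragraph{Terms of the same order as the dissipation.} After the cancellations, the equation for $w^i_{ik}$ ($k\neq i$) contains two kinds of troublesome terms.
\begin{itemize}
\item The cross-scale terms $w^i_{ii}c^i_{ik}+\bar c^i_{ik}w^k_{kk}$. Pair with $w^i_{ik}$, sum over $k$, and use only what $E$ and \eqref{eq_mfl_ass} give: $|w^l_{ll}|^2\le NE$, $\sum_{k\ne i}|c^i_{ik}|^2\le C/N$, $\sum_{k\ne i}|w^i_{ik}|^2\le E$. The result is bounded only by $\sqrt C\,E$.
\item The mixing term $\sum_{j\neq i,k}\bar c^i_{ij}w^j_{jk}$, coming from $\bar c^iB(w)$, which is essentially $\sum_{j\ne i}\bar c^i_{ij}\,(r_i\cdot r_j)$ with $r_j=(w^j_{jk})_k$ the rows of $B(w)$. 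Using $|\check\Delta|^2\le NE$, it is bounded only by $\sqrt{C/N}\,|r_i|\,|\check\Delta|\le\sqrt C\,E$.
\end{itemize}
Both bounds are saturated in mean-field-like configurations (e.g.\ $c^i_{ik}\sim\sqrt C/N$ with aligned rows), and $\sqrt C\,E$ is of the same order as the dissipation $2ME$. The same happens for columns (with the right multiplication) and in the $w^i_{hk}$ block, e.g.\ the term $w^h_{hh}\bar c^i_{hk}$.

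The cross-scale terms are triangular, since the diagonal is forced only at relative order $1/N$, so weights could tame them. The mixing term, however, is a self-interaction of the row/column block. \eqref{eq_dmcass} gives coercivity of $B(\bar c)$ on a single vector, not on a combination of different rows. So no reweighting of $E$ absorbs it; you would need $\sqrt C\ll M$, which is not assumed.

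This is also why ``as in the proof of \Cref{MF_thm}'' does not transfer. In \Cref{prop_est2} the quadratic term acts on a column as $B(c)(B(c)e_k)$, a single coercive multiplication. Its linearization $\Delta B(c)+B(\bar c)\Delta$ instead makes each row or column see one coercive factor and one mixing factor.

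\paragraph{The missing idea.} \eqref{eq_dmcass} acts without error only in Hilbert--Schmidt form: $\tr(\check\Delta^\trn B(c)\check\Delta)\ge M|\check\Delta|^2$ and $\tr(\check\Delta B(\bar c)\check\Delta^\trn)\ge M|\check\Delta|^2$. The paper's proof proceeds in two stages.
\begin{itemize}
\item It first estimates the \emph{unweighted} sum $|\check\Delta|^2+|D|^2+|\bs\delta|^2=|c-\bar c|^2$. There all cross terms are $O(1/N)$ relative to the dissipation, giving $\frac{\di}{\di t}|c-\bar c|^2\ge(2-\epsilon)M|c-\bar c|^2$, but with the crude initial size $|c-\bar c|^2(T)\lesssim NC_{g-\bar g}$.
\item It then cascades. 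At each step the already-controlled quantities serve as exponentially decaying sources (Duhamel), at the cost of an $\epsilon$ in the rate. In order: $|\check\Delta|^2\lesssim C_{g-\bar g}e^{-(2-2\epsilon)M(T-t)}$; then $\sup_i\delta_i^2$ via the per-$i$ estimate; then $|D|^2\lesssim N^{-1}e^{-(2-3\epsilon)M(T-t)}$; and finally the row/column sums. In that last step the mixing term $|r_i|\sqrt{C/N}\,|\check\Delta|$ becomes a source of the correct size $1/N$.
\end{itemize}

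Your first step, a Frobenius estimate for $B(w)$, points in this direction, but it falls short in three ways:
\begin{itemize}
\item its errors involve $D$ and must be coupled to $|D|^2$;
\item $|B(w)|^2$ is dominated by $\sum_i|w^i_{ii}|^2\sim N$;
\item ``the forcing carries the right $N$-scaling'' is exactly the point that needs the intermediate refinements of $|\check\Delta|$ and $\sup_i|\delta_i|$, not a single Gronwall inequality for $E$.
\end{itemize}
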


In \Cref{sec_lim}, as an application of \Cref{mainthm}, we address the problem of determining the large-population limit of both the evolutive and the ergodic Nash system; as a consequence, we also obtain that it in fact commutes with the long-time horizon limit (see \Cref{cor_comm} below). We are going to make the following additional structural assumption, replacing Assumption~\ref{mfl_ass}.

\begin{namedass}[(LMF)] \namedlabel{lim_ass}{\textbf{(LMF)}}
There exist parameters (\emph{labels}) $\lambda^i_N \in \Lambda \defeq [0,1]$ and a sequence of functions $\bar f_N \colon \Lambda \to \Sym(N)$ such that
for each $N \in \N$,
\begin{equation} \label{Qflim}
Q_{f^i \otimes I_d}(\bs x) = Q_{\bar f_N(\lambda^i_N) \otimes I_d}(x^i,\bs x^{-i}), \qquad \bs x \in (\R^d)^N,
\end{equation}
and likewise for $g^i$ (with corresponding function $\bar g_N$);
also, $Q_{\bar f_N \otimes I_d}, Q_{\bar g_N \otimes I_d} \colon \Lambda \times \R^d \times (\R^d)^{N-1} \to \R$ are Lipschitz continuous on $\Lambda$, uniformly in $N$, and invariant under permutations of the coordinates of $(\R^d)^{N-1}$. Furthermore, Assumption~\ref{mfl_ass} holds for $f^i$ defined by \eqref{Qflim} for any choice of $\lambda_N^i \in \Lambda$.
\end{namedass}

Such an assumption introduces a more concrete---and sufficiently regular---scenario of heterogeneous mean field games, akin to that considered in \cite[Section~7]{CR-CPAM26}, which allows to identify a limit value function (or, rather, a labeled family thereof) as $N\to\infty$, via standard compactness arguments.

In this regard, we have our second main result. We recall that $\call P_2(\call X)$ is the usual $2$-Wasserstein space of probability measures on $\call X$.

\begin{thm} \label{mainthmlim}
Let Assumptions~\ref{lim_ass} and \ref{dmc_ass} be in force. Let $\bs u^T = \bs u^T_N$ be the solution to \eqref{NS} and let $(\bs v,\bs\gamma) = (\bs v_N,\bs\gamma_N)$ be its corresponding ergodic limit, as in \Cref{mainthm}.
Then there exist unique functions
\[ \begin{gathered}
F, G \colon \Lambda \times \R^d \times \R^d \to \R,  \qquad U^T \colon \Lambda \times \Pc_2(\Lambda) \times [0,T] \times \R^d \times \R^d \to \R,
\\
V \colon \Lambda \times \Pc_2(\Lambda) \times \R^d \times \R^d \to \R, \qquad \Gamma \colon \Lambda \times \Pc_2(\Lambda) \to \R,
\end{gathered}
\]
Lipschitz continuous in $\Lambda \times \Pc_2(\Lambda)$, differentiable in $[0,T]$ and quadratic in $\R^d \times \R^d$, such that the following hold.
\begin{enumerate}[leftmargin=*,label=(\alph*)]
\item For all $i,k \in \N$, as $N \to \infty$, uniformly in $[0,T] \times K \times \Pc_2(\R^d)$ for any $K \subset \R^d$ compact, 
\[ \begin{gathered}
D_i^k u_N^{T,i}(t,\bs x) - D_x^k U^{T}(\lambda_N^i, m_{{\bs\lambda}_N^{-i}}; t, x^i, \beta(m_{{\bs x}^{-i}})) \to 0,
\\
N^k D_{j}^k u^{T,i}_N(t,\bs x) - D_b^k U^T(\lambda^i_N, m_{\bs\lambda^{-i}_N};t,x^i,\beta(m_{\bs x^{-i}})) \to 0 \qquad \forall\, j \neq i,
\end{gathered}
\]
where $\beta(m) \defeq \int_{\R} y \,\di m(y)$ denotes the barycentre of the measure $m$, and likewise for $\bs v_N$ and $V$ in place of $u^T_N$ and $U^T$; also
\[
\gamma_N^i - \Gamma(\lambda_N^i,m_{\bs\lambda_N^{-i}}) \to 0.
\]
\end{enumerate}
If $m_{\bs\lambda_N} \to \rho \in \Pc_2(\Lambda)$ as $N \to \infty$, then:
\begin{enumerate}[resume,leftmargin=*,label=(\alph*)]
\item $U^T$ solves the master equation
\begin{equation} \label{MEUT}
\begin{dcases} 
\begin{aligned}
&-\de_t U^T(\lambda,\rho;t,x,b) - \Delta_x U^T(\lambda,\rho;t) + \frac12 |D_x U^T(\lambda,\rho;t,x,b)|^2\\
&\quad + D_b U^T(\lambda,\rho;t,x,b) \cdot \int_{\Lambda} D_x U^T(\lambda',\rho;t,b,b) \,\di \rho(\lambda') = F(\lambda;x,b)
\end{aligned}
\\
U^T(\lambda,\rho;T,x,b) = G(\lambda;x,b),
\end{dcases}
\end{equation}
and
$(V,\Gamma)$ solves the ergodic master equation
\begin{equation} \label{MEV}
\begin{dcases} 
\begin{aligned}
&\Gamma(\lambda,\rho) - \Delta_x V(\lambda,\rho) + \frac12 |D_x V(\lambda,\rho;x,b)|^2 \\
&\quad + D_b V(\lambda,\rho;x,b) \cdot \int_{\Lambda} D_x V(\lambda',\rho;b,b) \,\di \rho(\lambda') = F(\lambda;x,b)
\end{aligned}
\\
V(\lambda,\rho;x,b) = G(\lambda;x,b);
\end{dcases}
\end{equation}
\item for all $T>0$, $\eta \in (0,1)$, $k \in \N$, $(\lambda,\rho,t,x,b) \in \Lambda \times \Pc_2(\Lambda) \times [0,\eta T] \times \R^d \times \R^d$,
\[
\frac{\bigl| (D_x)^k U^T(\lambda,\rho;t,x,b) - (D_x)^k \bigl( V(\lambda,\rho;x,b) + (T-t)\Gamma(\lambda,\rho)\bigr) \bigr|}{1 + |x|^2 + |b|^2} \leq \sqrt{K} e^{-\frac12(1-\eta)MT},
\]
where $K$ is the constant in \Cref{mainthm}(b).
\end{enumerate}
\end{thm}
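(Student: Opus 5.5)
The plan is to work entirely at the level of the Hessian matrices $c^{T,i}_N(t)$ and $\frk c^i_N$ given by \Cref{MF_thm} and \Cref{convthm}. Under Assumption~\ref{lim_ass} these are labelled, permutation-invariant objects, and the aim is to extract limits by compactness.

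\textbf{Symmetry reduction.} By \eqref{Qflim} and the permutation invariance of $Q_{\bar f_N}$, $Q_{\bar g_N}$, uniqueness in \Cref{MF_thm} gives the following: if the labels are permuted, the solution is permuted accordingly. Hence $c^{T,i}_N(t)$ is a function of $(\lambda^i_N, m_{\bs\lambda^{-i}_N})$ alone. Its entries then reduce to a few scalars:
\begin{itemize}
\item the diagonal $c^i_{ii}$;
\item off-diagonal ``own--other'' entries $c^i_{ij}$, of order $1/N$;
\item ``other--other'' entries $c^i_{jk}$, of order $1/N^2$ for $j\neq k$ and $1/N$ for $j=k$.
\end{itemize}
The mean-field-like bound from \Cref{MF_thm} (\eqref{eq_mfl_ass} with constant $C$) makes all suitably rescaled entries uniformly bounded in $N$ and $T$.

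\textbf{Regularity in the labels.} Next I would prove Lipschitz regularity in the labels. Take the Riccati system \eqref{MF_sys} for two label configurations and apply the stability estimate of \Cref{prop_cctaudist}, adapted from different terminal data to different data $f,\bar f$ and $g,\bar g$. The same Grönwall/monotonicity argument yields a bound on $C_{c-\bar c}$ in terms of $C_{f-\bar f}+C_{g-\bar g}$, uniformly in $t$. The Lipschitz continuity of $\bar f_N$ and $\bar g_N$ in $\Lambda$ then gives Lipschitz dependence of the rescaled entries on $(\lambda,m_{\bs\lambda^{-i}})$ in $W_2$.

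Writing $u^{T,i}_N(t,\bs x)$ as a quadratic in $(x^i, \beta(m_{\bs x^{-i}}))$ up to $o(1)$ terms, I expect the ``other--other'' non-diagonal block to collapse onto the barycentre, while the diagonal $1/N$ part contributes only lower-order and trace terms. This defines candidate limits via Arzelà--Ascoli on $\Lambda\times\Pc_2(\Lambda)\times[0,T]$. Passing to limits in \eqref{MF_sys} gives a limiting Riccati-type ODE, which is exactly \eqref{MEUT} once one recognises
\[
\sum_{j\ne i} D_ju^j\cdot D_ju^i \;\to\; D_bU\cdot\int D_xU(\lambda',\rho;b,b)\,\di\rho.
\]
The same procedure applied to \eqref{stat_Ric} gives \eqref{MEV}. $F$ and $G$ are obtained analogously from $\bar f_N$ and $\bar g_N$.

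\textbf{Uniqueness of the limit.} To obtain full convergence rather than convergence along subsequences, I would show uniqueness of Lipschitz, quadratic solutions of the limiting Riccati system, which is a finite-dimensional ODE system parametrised by $(\lambda,\rho)$. I would use the displacement monotonicity inherited in the limit from $M$, arguing as in \Cref{prop_cctaudist}. The main obstacle here is that the limiting equation is nonlocal in the label $\lambda'$. Uniqueness has to be proved in a space of functions of $\lambda$: I would try an $L^2(\rho)$ energy estimate that uses the strong monotonicity $B\geq M$ passed to the limit.

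\textbf{Exponential estimate.} Point (c) follows by passing \eqref{uiviconv} to the limit $N\to\infty$, since the bounds there are uniform in $N$. Taking $k=1$ in $D_i$ and restricting the denominators to the $(x,b)$ variables yields (c), with the square root coming from \eqref{uiviconv} being stated for squared differences.
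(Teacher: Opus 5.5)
Your plan follows the paper's proof essentially step for step. Both go through symmetry reduction, label-Lipschitz bounds from the stability estimate of \Cref{prop_cctaudist}, compactness (the paper defers to \cite[Theorem~7.5]{CR-CPAM26}), passage to the limit in \eqref{MF_sys}, and (c) by letting $N\to\infty$ in \eqref{uiviconv}. In two places you are more careful than the paper. First, relabelling changes $f$ as well as $g$, so the stability estimate does need a source term $f-\bar f$; the paper only varies the terminal datum. Second, full-sequence convergence really needs a uniqueness result for the limit system, which the paper only asserts. However, there is a genuine gap at your first step. The paper's argument has the same gap, so following it will not close it.

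The permutation invariance is only \emph{joint} in labels and positions: permuting the other players permutes $\bs\lambda^{-i}$ and $\bs x^{-i}$ simultaneously. So $c^{T,i}$ is not invariant under permutations of positions alone, and its entries do not reduce to a few scalars. For $j\neq i$, \eqref{MF_sys} reads
\[
-\dot c^i_{ij} + \bigl(c^i_{ii}+c^j_{jj}\bigr)c^i_{ij} + \sum_{k\neq i,j} c^i_{ik}c^k_{kj} + \sum_{k\neq i} c^k_{ki}c^i_{kj} = f^i_{ij}.
\]
By \Cref{prop_est2}, $c^j_{jj}$ is, up to $O(1/N)$, the solution of the scalar Riccati equation with data $\bar f_N(\lambda^j_N)_{11}$, $\bar g_N(\lambda^j_N)_{11}$. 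Whenever these depend on the label, $Nc^i_{ij}$ depends on $\lambda^j_N$ at order one. Two consequences follow. First, $\sum_{j\neq i}c^i_{ij}x^j$ is a label-weighted average, not a function of $\beta(m_{\bs x^{-i}})$: it can be nonzero while the barycentre vanishes. Second, in $\sum_{j\neq i}D_ju^j\cdot D_ju^i$ both factors carry $\lambda^j$-dependent weights. Its limit is therefore an integral against the joint empirical measure of $(\lambda^j,x^j)$, and it does not factor as $D_bU\cdot\int_\Lambda D_xU(\lambda',\rho;b,b)\,\di\rho(\lambda')$.

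Here is a concrete check. Take $d=1$, $f^i_{ii}=\alpha(\lambda^i)$ with $\alpha\geq 1$ Lipschitz, $f^i_{ij}=\phi/N$ with $|\phi|$ small, and no other--other coupling. To first order in $\phi$, the stationary solution has $Nc^i_{ij}\approx\phi/(\sqrt{\alpha(\lambda^i)}+\sqrt{\alpha(\lambda^j)})$. For the limit $V$ taken along configurations $(x,b,\dots,b)$, the $xb$-coefficient of \eqref{MEV} would require $\bar a(\lambda)\bigl(\sqrt{\alpha(\lambda)}+\int_\Lambda\sqrt{\alpha}\,\di\rho\bigr)=\phi$, where $\bar a(\lambda)=\int_\Lambda\phi\,\bigl(\sqrt{\alpha(\lambda)}+\sqrt{\alpha(\lambda')}\bigr)^{-1}\di\rho(\lambda')$. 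By Jensen's inequality this fails unless $\alpha$ is $\rho$-a.e.\ constant.

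So your ``recognition'' step, and the collapse onto the barycentre, need one of two things. Either an extra decoupling assumption (for instance, label-independent $\bar f_N(\cdot)_{11}$ and $\bar g_N(\cdot)_{11}$), or a limit depending on $\mu\in\Pc_2(\Lambda\times\R^d)$, as in the general master equation the paper writes down. The paper makes the same reduction in two places: it applies the block form of \Cref{rmk_qfp} to $\bar c^T_N$, and it uses $m_{\bs\lambda^{-i}}\otimes m_{\bs x^{-i}}$ in place of the joint empirical measure.

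A smaller point: \eqref{eq_mfl_ass} gives $O(N^{-3/2})$ for the other--other diagonal entries, not $O(1/N)$. You need the sharper bound; with $O(1/N)$ that block would contribute an order-one second-moment term rather than a lower-order one.
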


We note that the assumption that $m_{\bs\lambda_N} \to \rho$ is more a definition of $\rho$ than a requirement for $\bs\lambda_N$, in the sense that by compactness we know that, up to subsequences, $m_{\bs\lambda_N}$ always converges in $\Pc_2(\Lambda)$. We also point out that, for more general data, one would expect the first-order evolutive master equation in this heterogeneous setting to be of the form
\[
\begin{dcases} 
\begin{aligned}
&-\de_t U - \Delta_x U - \int_{\Lambda \times \R^d} \Delta_y\frac{\delta U}{\delta \mu}(\lambda,t,x,\mu,\lambda',x') \,\di \mu(\lambda',x') + H(\lambda,x,D_xU)  \\
&+  \int_{\Lambda \times \R^d} \!\!\!D_pH(\lambda',x',D_xU(\lambda',t,x',\mu)) \cdot D_y \frac{\delta U}{\delta \mu} (\lambda,t,x,\mu,\lambda',x') \,\di \mu(\lambda',x')  = F(\lambda,x,(\pi_{\R^d})_\sharp \mu)
\end{aligned}
\\[7pt]
U(\lambda,T,x,\mu) = G(\lambda,x,(\pi_{\R^d})_\sharp \mu)
\end{dcases}
\]
for $U = U(\lambda,t,x,\mu)$ with $\mu \in \Pc_2(\Lambda \times \R^d)$; in fact, both \eqref{MEUT} and \eqref{MEV} are precisely the forms that this and its ergodic counterpart assume in the linear-quadratic framework.

Finally, as a direct consequence of point (b) of \Cref{mainthm} and points (a)--(c) of \Cref{mainthmlim}, we obtain the following suggestive result.

\begin{cor} \label{cor_comm}
Under the assumptions of \Cref{mainthmlim}, let $\bar \gamma_N \defeq \Delta_x \bar v_N$. Then, the following diagram commutes for each $k \in \N$, locally uniformly in $[0,T] \times \R^d \times \Pc_2(\R^d)$:
\[
  \begin{tikzcd}[every arrow/.style={draw,mapsto},labels={inner sep=1ex},column sep=1.5cm,row sep=1cm]
     (D_x)^k \bigl( \bar u_N^T - (T-t) \bar \gamma_N \bigr) \arrow[r,"N\to\infty"] \arrow[swap,d,"T \to \infty"] & (D_x)^k \bigl( U^T - (T-t)\Gamma \bigr) \arrow[d,"T \to \infty"] \\
     (D_x)^k \bar v_N \arrow[swap,r,"N\to\infty"] & (D_x)^k V \,.
  \end{tikzcd}
\]
\end{cor}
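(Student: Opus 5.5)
The plan is a diagram chase: each of the four arrows is already one of the convergence statements proved above, so what remains is to show that the two composite routes end at the same object, uniformly in the stated sense. I read $\bar u_N^T$ and $\bar v_N$ as the labelled representatives of $u^{T,i}_N$ and $v^i_N$ written in the variables $(\lambda^i_N, m_{\bs\lambda^{-i}_N}; t, x^i, \beta(m_{\bs x^{-i}}))$, as in \Cref{mainthmlim}(a). With this reading, all four arrows are known.

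The top arrow is \Cref{mainthmlim}(a) applied to $U^T$, combined with the convergence $\gamma^i_N - \Gamma(\lambda^i_N,m_{\bs\lambda^{-i}_N}) \to 0$ from the same point. Since $T$ is fixed and $t \in [0,T]$, the term $(T-t)\bigl(\gamma^i_N - \Gamma\bigr)$ goes to zero uniformly. For $\rho$ I use the Lipschitz continuity of $\Gamma$ in $\Pc_2(\Lambda)$ together with $m_{\bs\lambda_N} \to \rho$. The bottom arrow is the analogous statement for $(\bs v_N, V)$ in \Cref{mainthmlim}(a). The left arrow is \Cref{mainthm}(b), specifically estimate \eqref{uiviconv}. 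Its bound is $K e^{-(1-\eta)MT}$ on $[0,\eta T]$ with $K$ independent of $N$, so for fixed $t$ and $x$ in a compact set the quantity goes to $0$ as $T \to \infty$, uniformly in $N \geq N_0$. The right arrow is \Cref{mainthmlim}(c), with bound $\sqrt K e^{-\frac12(1-\eta)MT}(1+|x|^2+|b|^2)$.

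To get commutativity in the strong (uniform) sense, I split the error at some fixed $(t,x)$ in a compact set as follows:
\[
\bigl|D^k(\bar u^T_N - (T-t)\bar\gamma_N) - D^kV\bigr| \leq \bigl|D^k(\bar u^T_N - (T-t)\bar\gamma_N) - D^k\bar v_N\bigr| + \bigl|D^k\bar v_N - D^kV\bigr|.
\]
The first term is small uniformly in $N$ once $T$ is large, by the left arrow. The second term is small once $N$ is large, independently of $T$, by the bottom arrow. Hence the double limit exists and equals $D^kV$. Composing the top arrow with the right arrow gives the same limit $D^kV$. So both paths agree, and the iterated limits commute in either order.

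The main obstacle is the uniformity of the left-arrow bound with respect to $N$ and the measure variable. In \eqref{uiviconv} the weights involve $\frac1N|\bs x^{-i}|^2$ and $\frac1{N^2}|\bs x^{-i}|^4$, which are controlled by the second moment of $m_{\bs x^{-i}}$. Locally uniform convergence in $\Pc_2(\R^d)$ therefore needs these moments to stay bounded on compact subsets of $\Pc_2(\R^d)$, which holds because such subsets have bounded second moments. For $k=1$ I will use the $N\,|D_j(\cdot)|^2$ part of the bound to control the $D_b$-derivatives, matching the $N^kD_j^k$ scaling in \Cref{mainthmlim}(a).
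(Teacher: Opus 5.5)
Your proposal is correct and follows the paper's route. The paper obtains the corollary as a direct consequence of \Cref{mainthm}(b) and \Cref{mainthmlim}(a)--(c), which are exactly the four arrows you identify, and your triangle-inequality step (using that $K$ in \eqref{uiviconv} does not depend on $N$) adds only that the joint limit also exists.

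One caveat, which you inherit from the paper rather than introduce, concerns $k=0$. There $\bar\gamma_N$ has to be read, as you implicitly do, as the representative of $\gamma^i_N=\Delta v^i$ (full Laplacian), and even then $u^{T,i}-(T-t)\gamma^i-v^i$ tends as $T\to\infty$ to the generally nonzero constant $d\int_{-\infty}^{T}\tr\bigl(c^{T,i}(s)-\frk c^i\bigr)\,\di s$, not to $0$, so at zeroth order the vertical arrows hold only up to additive constants.
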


\section{A priori bounds on the Riccati system} \label{sec_bounds}

This section contains the needed estimates to prove Theorems~\ref{MF_thm} and \ref{convthm}, and the proofs of those results as well.

\subsection{Uniform-in-time scaling and monotonicity estimates}

As explained previously, the proof of \Cref{MF_thm} will be based on an argument of \emph{propagation} of some a priori bounds. To obtain the upcoming estimates, we will repeatedly use the following fact.

\begin{lem} \label{lem_psg}
Let $v \in \mathrm{AC}([0,T])$ solve
\[
\begin{dcases}
- \dot v \leq a - \mu v^\ell \quad \text{on}\ [0,T]
\\
v(T) = b,
\end{dcases}
\]
with $a,\mu,\ell > 0$ and $b \in \R$. Then
\[
v \leq \max \biggl\{ \Bigl( \frac{a}{\mu} \Bigr)^{\frac1\ell}, b \biggr\} \quad \text{on} \ [0,T].
\]
\end{lem}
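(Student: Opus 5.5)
The plan is a one-sided comparison argument by contradiction, using only the absolute continuity of $v$. Set
\[
K \defeq \max\Bigl\{ \Bigl(\frac{a}{\mu}\Bigr)^{\frac1\ell}, b \Bigr\},
\]
and note that $K \geq (a/\mu)^{1/\ell} > 0$. Consequently, on any set where $v > K$ we have $v > 0$, so $v^\ell$ is unambiguously defined there and satisfies $v^\ell > a/\mu$. This means the differential inequality only needs to be used in the region where $v$ is positive, which is where it makes sense for arbitrary $\ell > 0$.

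Suppose, for contradiction, that $v(t_0) > K$ for some $t_0 \in [0,T]$. Since $v(T) = b \leq K$, we have $t_0 < T$. Define
\[
t_1 \defeq \inf\{ s \in [t_0,T] : v(s) \leq K \}.
\]
The set is nonempty because it contains $T$. By continuity of $v$ we get $t_1 > t_0$, $v(t_1) = K$, and $v > K$ on $[t_0, t_1)$.

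On $(t_0,t_1)$ we have $v^\ell > a/\mu$, so for a.e.\ $s$ in this interval
\[
-\dot v(s) \leq a - \mu v(s)^\ell < 0, \quad \text{that is,} \quad \dot v(s) > 0 .
\]
Since $v$ is absolutely continuous, we can integrate:
\[
v(t_1) - v(t_0) = \int_{t_0}^{t_1} \dot v(s)\,\di s > 0 .
\]
This gives $K = v(t_1) > v(t_0) > K$, a contradiction. Hence $v \leq K$ on $[0,T]$.

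There is no substantial obstacle in this argument. The only points needing care are:
\begin{itemize}
\item working with the a.e.\ derivative, so the fundamental theorem of calculus must be applied in its absolutely continuous form;
\item using the positivity of $K$ to ensure $v^\ell$ is meaningful on the region where the inequality is invoked.
\end{itemize}
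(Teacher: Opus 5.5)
Your proof is correct and uses the same idea as the paper's: $v$ is strictly increasing on the set where $v > (a/\mu)^{1/\ell}$, so either that set is empty or the maximum of $v$ is attained at $T$. Your first-exit-time contradiction argument is a careful, fully rigorous version of that observation, including the integration of the a.e.\ derivative of the absolutely continuous $v$ and the positivity of $K$.
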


\begin{proof}
It suffices to note that if $v$ must be strictly increasing on $X \defeq v^{-1}(((a/\mu)^{1/\ell},+\infty))$. Therefore, either $X = \emptyset$ or $\max_{[0,T]} v = v(T)$.
\end{proof}



\begin{prop} \label{prop_est2}
Let Assumption \ref{mfl_ass} be in force. Let $(c^i)_{1 \leq i \leq N}$ be an absolutely continuous solution to \eqref{MF_sys} on $(\tau,T]$ with $B(c) \geq MI_N$, for some $\tau \in [0,T)$ and $M > 0$ independent of $N$. Suppose also that 
\begin{equation} \label{extracond_loop}
\sum_{\substack{1 \leq i,h,k \leq N \\ h,k \neq i}} |c^i_{hk}|^2 \leq \frac{M^2}4 \quad \text{on} \ [\tau, T].
\end{equation}
Then there is a constant $K_M$ (depending only on $ C_g$, $ C_f$, and $M$) such that
\[
\sup_{1 \leq k \leq N} \sum_{i\neq k} \Bigl( |c^i_{ik}|^2 + |c^k_{ki}|^2  \Bigr) +  \sum_{\substack{1 \leq i,h,k \leq N \\ h,k \neq i}} |c^i_{hk}|^2 \leq \frac{K_M}N \quad \text{on} \ [\tau, T].
\]
\end{prop}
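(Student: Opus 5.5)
The plan is to differentiate suitable squared quantities along \eqref{MF_sys}, use $B(c)\geq MI_N$ to produce a dissipative term, and conclude with \Cref{lem_psg}. Write $B=B(c)$, so $B_{lh}=c^l_{lh}$. In coordinates, \eqref{MF_sys} reads
\[
-\dot c^i_{hk} - c^i_{hi}c^i_{ik} + \sum_{l} \bigl( c^l_{lh}\, c^i_{lk} + c^i_{hl}\, c^l_{lk} \bigr) = f^i_{hk}.
\]
From this I will derive evolution inequalities for three families of quantities:
\[
S \defeq \sum_{i}\sum_{h,k\neq i}|c^i_{hk}|^2, \qquad R_k \defeq \sum_{i\neq k}|c^i_{ik}|^2, \qquad P_k\defeq\sum_{i\neq k}|c^k_{ki}|^2 .
\]
The structural point is that, for fixed $i$, the sum over $l\neq i$ of the linear term acts on the block $(c^i_{hk})_{h,k\neq i}$ through the principal submatrix of $B$ obtained by deleting row and column $i$. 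That submatrix still satisfies the lower bound $M$ in the sense of \eqref{eq_dmcass}. Hence
\[
-\tfrac12 \dot S \geq 2M S - (\text{remainder}) - \sqrt{S}\,\Bigl(\sum_i\sum_{h,k\neq i}|f^i_{hk}|^2\Bigr)^{1/2},
\]
and likewise $-\frac12\dot R_k\geq M R_k - \dots$ and $-\frac12\dot P_k\geq MP_k-\dots$, with one factor $B$ acting on the vector $(c^i_{ik})_{i\neq k}$ or $(c^k_{ki})_{i\neq k}$.

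The remainders are of two types. The first type consists of the terms with $l=i$ that were split off, such as $c^i_{ih}c^i_{ik}c^i_{hk}$ and $c^i_{ih}c^i_{hk}\,c^i_{ik}$. These are products of row entries $c^i_{ik}$, which are the ones controlled by $R,P$ up to diagonal terms, with the block entries. The second type consists of terms in the $R_k,P_k$ equations in which the block $(c^k_{hl})_{h,l\neq k}$ multiplies the vector $(c^k_{kl})_l$.

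For the second type I use \eqref{extracond_loop}: the Hilbert--Schmidt norm, hence the operator norm, of each block $(c^k_{hl})_{h,l\neq k}$ is at most $M/2$. These terms are therefore absorbed by the dissipation $M$, leaving a net rate of at least $M/2$. For the first type I use Cauchy--Schwarz together with the same bound $|c^i_{hk}|\leq M/2$ and Young's inequality. The result is an estimate of the form $\frac{M}{4}(R+P+S)$ plus terms containing the diagonal entries $c^i_{ii}$, which are of size $O(1)$ since $B\geq M$ and the data are $O(1)$.

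The forcing terms are $\sum_{i\neq k}(|f^i_{ik}|^2+|f^k_{ki}|^2)\leq C_f/N$ and $\sum_i\sum_{h,k\neq i}|f^i_{hk}|^2\leq C_f/N$ by \ref{mfl_ass}. I will combine everything into the single functional
\[
\Phi \defeq N\bigl(\max_k (R_k+P_k) + S\bigr).
\]
The maximum of finitely many absolutely continuous functions is absolutely continuous, and at a.e.\ time its derivative coincides with that of an active index $k$. This should yield
\[
-\dot\Phi \leq a - \mu\,\Phi \quad\text{or}\quad -\dot\Phi\leq a-\mu\Phi^{\ell},
\]
with $a$ and $\mu$ depending only on $C_f$ and $M$, while the terminal value $\Phi(T)$ is bounded by $2C_g$ via \ref{mfl_ass}. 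An application of \Cref{lem_psg} then gives $\Phi\leq K_M$, which is the claim.

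The main obstacle is the diagonal entries $c^i_{ii}$ appearing in the cross terms. They are not small, and the $R_k$-equation involves the products $c^i_{ii}\,c^i_{ik}$, which must not destroy the $1/N$ scaling. My first attempt is to bound $|c^i_{ii}|$ uniformly by a separate scalar application of \Cref{lem_psg} to the equation for $c^i_{ii}$, using $-(c^i_{ii})^2$ and $2c^i_{ii}B_{ii}$. Then I will track these terms as linear drift terms, which are harmless for the $N$-scaling. Balancing Young's inequalities so that the net dissipation stays positive is where the specific constant $M^2/4$ in \eqref{extracond_loop} should enter.
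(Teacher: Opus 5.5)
Your ingredients are the right ones: the evolutions of $R_k$, $P_k$, $S$, the coercivity of principal submatrices of $B(c)$, \eqref{extracond_loop} for the cubic cross terms, and \Cref{lem_psg}. The step that fails is merging everything into the single functional $\Phi=N(\max_k(R_k+P_k)+S)$ and expecting $-\dot\Phi\leq a-\mu\Phi$ (or $a-\mu\Phi^\ell$). After $-(c^ie_i)^{\otimes 2}$ cancels against the $l=i$ term of $B(c)^\trn c^i$, the $S$-equation still contains the remainder $\sum_i\sum_{h,k\neq i}c^i_{hk}c^i_{hi}c^i_{ik}$, which has two row factors. Bounding each block by $M/2$ via \eqref{extracond_loop} gives $\frac M2\sum_iP_i$. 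This is only $O(1)$ even when $\max_iP_i=O(1/N)$, so after multiplication by $N$ it can be $N$ times the $P$-part of $\Phi$. Cauchy--Schwarz and Young give instead $\frac M2S+\frac1{2M}\sum_iP_i^2\leq\frac M2S+\frac N{2M}(\max_iP_i)^2$. That puts a term of order $+\Phi^2/M$ into $-\dot\Phi$, which is superlinear and has the wrong sign for \Cref{lem_psg}. Because this remainder is quadratic in the row entries, it cannot be absorbed with the weight $N$ on $S$ that the claimed scaling requires.

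What you are missing is that the row quantities decouple from $S$: $S$ enters the $R_k$- and $P_k$-inequalities only through the hypothesis \eqref{extracond_loop}. For instance, the cross term in the $R_k$-equation is $\sum_{i\neq k}\sum_{j\neq i}c^i_{ik}c^j_{ji}c^i_{jk}$. It involves all the $R_i$ and the $k$-th columns of all blocks of the $c^i$, not only the block of $c^k$ as you describe. By Cauchy--Schwarz and \eqref{extracond_loop} it is at most $\frac M2\sqrt{R_k}\,(\max_iR_i)^{1/2}$. So you must first bound $\max_kR_k$, then $\max_iP_i$, and only then feed $\sum_iP_i^2\leq N(\max_iP_i)^2\lesssim 1/N$ into the $S$-inequality as a forcing term. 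This is the sequential order of the paper's proof, and here it is essential.

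The obstacle you single out with the diagonal entries is not actually there, and your remedy for it would not work. After the same cancellation, the diagonal entries enter the $R_k$- and $P_i$-equations only through $c^k_{kk}R_k$, $c^i_{ii}P_i$, and the diagonals of the principal-submatrix quadratic forms. All of these carry the favourable sign, because $c^i_{ii}=B(c)_{ii}\geq M$. This is also why the dissipation rate in those equations is $2M$ rather than $M$.

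On the other hand, $B(c)\geq MI_N$ gives only this lower bound, not $c^i_{ii}=O(1)$. An upper bound from $-\dot c^i_{ii}+|c^i_{ii}|^2+2\sum_{j\neq i}c^j_{ji}c^i_{ij}=f^i_{ii}$ requires controlling $|\sum_{j\neq i}c^j_{ji}c^i_{ij}|\leq\sqrt{R_iP_i}$. That is the very statement being proved, which is why the paper obtains it only afterwards, in \Cref{prop_est3}. Nor are drift terms of unknown sign with $O(1)$ coefficients harmless here: they compete with the dissipation, and the dissipation is what makes \Cref{lem_psg} give bounds uniform in $T$.

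Finally, your displayed inequality for $S$ has the wrong sign. It should read $-\frac12\dot S\leq -2MS+\sqrt S\,\bigl(\sum_i\sum_{h,k\neq i}|f^i_{hk}|^2\bigr)^{1/2}+|\text{remainder}|$.
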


\begin{proof}
Multiplying the equation for $c^i_{ik}$ by $c^i_{ik}$ and summing over $i \neq k$ we obtain
\[  \begin{split}
-\frac12 \frac{\di}{\di t} \sum_{i\neq k} |c^i_{ik}|^2 &=  \sum_{i\neq k} f^i_{ik} c^i_{ik} - \sum_{i\neq k} c^k_{kk} |c^i_{ik}|^2 - \sum_{i,j \neq k} c^i_{ik} c^i_{ji}  c^j_{jk} - \sum_{\substack{i \neq k \\ j\neq i}} c^i_{ik} c^i_{jk}  c^j_{ji} \\
&\leq \frac{1}{4M} \sum_{i\neq k} |f^i_{ik}|^2 -  \frac{M}{2} \sup_k \sum_{i\neq k} |c^i_{ik}|^2,
\end{split} \]
so \Cref{lem_psg} gives
\[
\sup_k \sum_{i\neq k} |c^i_{ik}(t)|^2 \leq \frac1N \max\biggl\{ \frac{C_{f}}{2M^2}, C_g \biggr\} \eqdef \frac{C_0}{N}.
\]
On the other hand, summing over $k \neq i$ we obtain
\[  \begin{split}
-\frac12 \frac{\di}{\di t} \sum_{k\neq i} |c^i_{ik}|^2 &=  \sum_{k\neq i} f^i_{ik} c^i_{ik} - \sum_{k\neq i} c^i_{ii} |c^i_{ik}|^2 - \sum_{j,k \neq i} c^i_{ik} c^i_{ji}  c^j_{jk} - \sum_{j,k \neq i} c^i_{ik} c^i_{jk}  c^j_{ji} \\
&\leq \frac{1}{4M} \sum_{k\neq i} |f^i_{ik}|^2 + \frac1{4M} \sum_{j \neq i} |c^j_{ji}|^2 - \frac{3}{4M} \sum_{k\neq i} |c^i_{ik}|^2,
\end{split} 
\]
whence, by \Cref{lem_psg} and the previous estimate,
\[
\sup_i \sum_{k\neq i} |c^i_{ik}(t)|^2  
\leq \frac1N \max \biggl\{ \frac1{3M^2} \Bigl( \frac{C_f}{M} + C_0 \Bigr), C_g \biggr\} \eqdef \frac{C_1}{N}.
\]
We now multiply the equation for $c^i_{hk}$ by $c^i_{hk}$ and sum over $i$, $h \neq i$ and $k \neq i$, obtaining
\[ 
\begin{split}
- \frac12 \frac{\di}{\di t} \sum_{\substack{i,h,k \\ h,k\neq i}} |c^i_{hk}|^2 &=  \sum_{\substack{i,h,k \\ h,k\neq i}} f^i_{hk} c^i_{hk} - \sum_{\substack{i,j,h,k \\ j,h,k\neq i}} c^i_{hk} c^i_{jh}  c^j_{jk} - \sum_{\substack{i,j,h,k \\ h,k\neq i}} c^i_{hk} c^i_{jk}  c^j_{jh} 
\\
&\leq \frac{1}{2M} \sum_{\substack{i,h,k \\ h,k\neq i}} |f^i_{hk}|^2 + \frac1{2M} \sum_i \biggl(\, \sum_{h \neq i} |c^i_{ih}|^2 \biggr)^2 - M \sum_{\substack{i,h,k \\ h,k\neq i}} |c^i_{hk}|^2,
\end{split}
\]
so \Cref{lem_psg} again and the previous estimate give
\[
\sum_{\substack{i,h,k \\ h,k\neq i}} |c^i_{hk}(t)|^2 \leq \frac1N \max \Bigl\{ \frac1{2M^2}( C_f + C_1^2), C_g \Bigr\}. \qedhere
\]
\end{proof}

\begin{prop} \label{prop_est3}
Under the hypotheses of \Cref{prop_est2}, suppose further that $N \geq 4K_M/C_f^{\frac12}$, where $K_M$ is the constant given in \Cref{prop_est2}. Then there is a constant $K'>0$ (depending only on $C_g$ and $C_f$) such that
\begin{equation} \label{eq_est3}
 c^i_{ii} \leq K' \quad \text{on} \  [\tau, T].
\end{equation}
\end{prop}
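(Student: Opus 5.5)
The plan is to read off the scalar ODE satisfied by the diagonal entry $c^i_{ii}$ from \eqref{MF_sys} and then close with \Cref{lem_psg} applied with exponent $\ell = 2$.

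\emph{Step 1 (the diagonal equation).} Take the $(i,i)$ entry of the $i$-th equation in \eqref{MF_sys}. Since $(c^ie_i)^{\otimes 2}_{ii} = (c^i_{ii})^2$ and $B(c)_{ji} = c^j_{ji}$, we get
\[
(B(c)^\trn c^i + c^i B(c))_{ii} = 2\sum_j c^j_{ji}c^i_{ji} = 2(c^i_{ii})^2 + 2\sum_{j\neq i} c^j_{ji} c^i_{ij}.
\]
Hence
\[
-\dot c^i_{ii} = f^i_{ii} - (c^i_{ii})^2 - 2\sum_{j \neq i} c^j_{ji}\, c^i_{ij}.
\]
The quadratic term enters with a good sign, so this is exactly the structure required by \Cref{lem_psg}.

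\emph{Step 2 (bounding the remaining terms).} By Assumption~\ref{mfl_ass}, $|f^i_{ii}|^2/N \leq C_f/N$, that is, $|f^i_{ii}| \leq C_f^{1/2}$. Similarly $g^i_{ii} \leq C_g^{1/2}$, which controls the terminal value.

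For the cross term I use Cauchy--Schwarz together with \Cref{prop_est2}:
\[
\Bigl|\sum_{j\neq i} c^j_{ji}c^i_{ij}\Bigr| \leq \Bigl(\sum_{j\neq i}|c^j_{ji}|^2\Bigr)^{1/2}\Bigl(\sum_{j\neq i}|c^i_{ij}|^2\Bigr)^{1/2} \leq \frac{K_M}{N}.
\]
The hypothesis $N \geq 4K_M/C_f^{1/2}$ gives $2K_M/N \leq \tfrac12 C_f^{1/2}$. Therefore, on $[\tau,T]$,
\[
-\dot c^i_{ii} \leq \tfrac32 C_f^{1/2} - (c^i_{ii})^2 .
\]

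\emph{Step 3 (conclusion).} Apply \Cref{lem_psg} with $v = c^i_{ii}$, $a = \tfrac32 C_f^{1/2}$, $\mu = 1$, $\ell = 2$ and $b = g^i_{ii}$. This yields
\[
c^i_{ii} \leq K' \defeq \max\bigl\{ (\tfrac32)^{1/2} C_f^{1/4},\, C_g^{1/2}\bigr\},
\]
which depends only on $C_f$ and $C_g$, as claimed.

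The only mild subtlety is that $c^i_{ii}$ may be negative, so $v^\ell$ is not a priori meaningful for $\ell = 2$. However, the proof of \Cref{lem_psg} only uses the differential inequality on the set where $v > (a/\mu)^{1/\ell} > 0$. There $-(c^i_{ii})^2 = -v^2$ is exactly the term required, so the argument goes through unchanged. I expect no real obstacle beyond verifying the index computation in Step 1, namely that the cross terms are precisely $c^j_{ji}c^i_{ij}$, which are controlled by \Cref{prop_est2}.
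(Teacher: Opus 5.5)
Your proof is correct and follows the paper's argument. Like the paper, you derive the scalar equation for $c^i_{ii}$, bound the cross term $\sum_{j\neq i} c^j_{ji}c^i_{ij}$ by $K_M/N$ via \Cref{prop_est2}, use $N \geq 4K_M/C_f^{1/2}$, and close with \Cref{lem_psg} for $\ell=2$.

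Your sign is actually the right one: the cross term contributes $+2K_M/N$ to the upper bound on $-\dot c^i_{ii}$, whereas the paper's display has $-2K_M/N$. So the constant the argument genuinely yields is your $K'=\max\{(3/2)^{1/2}C_f^{1/4},\,C_g^{1/2}\}$, not the paper's $C_f^{1/4}/\sqrt2$.
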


\begin{proof}
Note that $c^i_{ii}$ solves
\[
- \dot c^i_{ii} + |c^i_{ii}|^2 + 2\sum_{j\neq i} c^j_{ji} c^i_{ij} = f^i_{ii},
\]
where by \Cref{prop_est2}
\[
\biggl| \sum_{j\neq i} c^j_{ji}(t) c^i_{ij}(t) \biggr| \leq \frac{K_M}{N}.
\]
Therefore,
\[
-\dot c^i_{ii} \leq C_f^{\frac12} - \frac{2K_M}{N} - |c^i_{ii}|^2 
\]
and by \Cref{lem_psg}
\[
c^i_{ii} \leq \max \biggl\{ \frac{C_f^{\frac14}}{\sqrt 2}, C_g^{\frac12} \biggr\}.
\qedhere
\]
\end{proof}

\begin{prop} \label{prop_prpg}
Under the hypotheses of \Cref{prop_est3}, suppose further that $N \geq 2K_M/M_f$.
Then
\[
B(c) \geq \min \biggl\{ M_g, \frac{M_f}{2K' + \sqrt{2M_f}} \biggr\} \quad \text{on} \ [\tau,T].
\]
\end{prop}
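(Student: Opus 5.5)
The plan is to derive a closed matrix Riccati equation for $B(c)$ itself and then run a minimum-eigenvalue comparison argument backwards in time from $T$, in the spirit of \Cref{lem_psg}.

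\textbf{Step 1: the equation for $B(c)$.} Since $B(c)_{ij}=c^i_{ij}$, I read off the $(i,j)$ entry of the $i$-th equation in \eqref{MF_sys}. The relevant entries are
\[
\bigl((c^ie_i)^{\otimes2}\bigr)_{ij}=c^i_{ii}c^i_{ij},\qquad (B(c)^\trn c^i)_{ij}=\sum_k c^k_{ki}c^i_{kj},\qquad (c^iB(c))_{ij}=\sum_k c^i_{ik}c^k_{kj}.
\]
The $k=i$ term of the middle sum cancels the quadratic term. This leaves
\[
-\tfrac{\di}{\di t}B(c)+B(c)^2+E=B(f),\qquad E_{ij}\defeq\sum_{k\neq i}c^k_{ki}\,c^i_{kj}.
\]
The error $E$ should be negligible for large $N$. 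I split $E$ according to $j=i$ or $j\neq i$, writing $c^i_{ki}=c^i_{ik}$, and apply Cauchy--Schwarz. The conclusion of \Cref{prop_est2} then gives $|E\bs x\cdot\bs x|\leq \frac{2K_M}{N}|\bs x|^2$, up to a harmless numerical constant. Hence the assumption $N\geq 2K_M/M_f$ makes $E$ absorbable into $\frac12 M_f$ (or into $M_f$ minus a margin).

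\textbf{Step 2: a differential inequality for $m(t)$.} Set $m(t)\defeq\min_{|\bs x|=1}B(c(t))\bs x\cdot\bs x$, the least eigenvalue of the symmetric part of $B(c(t))$. It is Lipschitz, and at a.e.\ $t$ its derivative is controlled by $\frac{\di}{\di t}B(c)\bs x\cdot\bs x$ at a minimising unit vector $\bs x$ (Danskin-type argument). By Step 1 and \ref{dmc_ass} for $f$, at such $\bs x$,
\[
-\tfrac{\di}{\di t}\bigl(B(c)\bs x\cdot\bs x\bigr)\geq M_f-\tfrac{M_f}2-B(c)^2\bs x\cdot\bs x .
\]
The key task is to bound $B(c)^2\bs x\cdot\bs x=B(c)\bs x\cdot B(c)^\trn\bs x$ from above by something of the form $(2K'+\sqrt{2M_f})\,m$ plus terms absorbable in $M_f$.

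To do this, I write $B(c)=D+O$, with $D=\mathrm{diag}(c^i_{ii})\leq K'$ by \Cref{prop_est3} and $O$ the off-diagonal part. Using the minimality of $\bs x$ (so that the symmetric part of $B(c)$ applied to $\bs x$ equals $m\bs x$), the expression $B(c)^2\bs x\cdot\bs x$ can be rewritten in terms of $m$, $D$ and the antisymmetric part of $O$. Its size is controlled via Young's inequality with the weight chosen as $\sqrt{2M_f}$, which is where that constant should come from. The outcome should be an inequality of the form
\[
-\dot m\geq \tfrac{M_f}{2}\ \text{(or similar)}-(2K'+\sqrt{2M_f})\,m^{+}\quad\text{a.e. on }[\tau,T],\qquad m(T)\geq M_g .
\]

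\textbf{Step 3: conclusion.} I apply the argument of \Cref{lem_psg} to $-m$: whenever $m<M_f/(2K'+\sqrt{2M_f})$, the right-hand side is positive, so $m$ is strictly decreasing there, i.e.\ increasing backwards in time. Hence $m$ cannot fall below $\min\{M_g,\,M_f/(2K'+\sqrt{2M_f})\}$ on $[\tau,T]$, which is the claim.

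\textbf{Main obstacle.} The hardest step is the estimate of $B(c)^2\bs x\cdot\bs x$ in Step 2. Since $B(c)$ is not symmetric, the antisymmetric part of the off-diagonal block need not be small in operator norm: its Hilbert--Schmidt norm is only $O(\sqrt{K_M})$. To handle this, I will try to use the row and column bounds from \Cref{prop_est2} through Schur's test, and I expect to have to trade the exact constants against the precise form of the Young inequality to match $2K'+\sqrt{2M_f}$.
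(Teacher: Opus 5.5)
Your route differs from the paper's: you use a minimum-eigenvalue barrier for the symmetric part of $B(c)$, whereas the paper uses the adjoint flow $\dot{\bs\xi}=-B(c)^\trn\bs\xi$. The route is viable, but as written the decisive estimate in Step~2 is missing, and the tactic you propose for it fails. Write $B(c)=S+A$ with $S$ symmetric and $A$ antisymmetric. As you note, \Cref{prop_est2} only bounds the rows and columns of the off-diagonal part in $\ell^2$, at order $\sqrt{K_M/N}$. Schur's test then gives $\|A\|\lesssim\sqrt{K_M}$, which is of order one, and genuinely so for heterogeneous mean-field data. No Young weight can absorb such a contribution into $M_f$, let alone produce the constant $2K'+\sqrt{2M_f}$. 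So the plan you describe does not reach the inequality you announce for $-\dot m$.

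The missing observation is that $A$ needs no control at all, because it enters with a favourable sign:
\[
B(c)^2\bs x\cdot\bs x=B(c)\bs x\cdot B(c)^\trn\bs x=(S+A)\bs x\cdot(S-A)\bs x=|S\bs x|^2-|A\bs x|^2 .
\]
At a minimising unit vector $S\bs x=m\bs x$, hence $B(c)^2\bs x\cdot\bs x\leq m^2$. Therefore, a.e., $-\dot m\geq M_f-\|E\|-m^2\geq\tfrac12M_f-m^2$.

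Your bound $\|E\|\leq 2K_M/N$ only gives $\|E\|\leq M_f$ under $N\geq 2K_M/M_f$. A slightly finer split fixes this: AM--GM on the diagonal of $E$ and Cauchy--Schwarz on its off-diagonal block give $\|E\|\leq K_M/N\leq\frac12M_f$.

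This right-hand side is positive only for $|m|<\sqrt{M_f/2}$, so Step~3 must be run as a first-crossing argument on the strip $0<m<\min\{M_g,\sqrt{M_f/2}\}$. It yields $m\geq\min\{M_g,\sqrt{M_f/2}\}$, which implies the claim since $\sqrt{M_f/2}=M_f/\sqrt{2M_f}\geq M_f/(2K'+\sqrt{2M_f})$.

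For comparison, along the paper's adjoint flow the $B(c)^2$ term becomes $+|B(c)^\trn\bs\xi|^2\geq0$, with no eigenvector structure needed. The price is that one must keep $|\bs\xi|$ from decaying, which needs an upper bound on $B(c)$; this is where $K'$ and $\tilde K$ enter. Your completed route needs neither \Cref{prop_est3} nor any upper bound on $B(c)$, and it gives a sharper, $K'$-free threshold. This is a real advantage. By the very row/column observation you made, \Cref{prop_est2} controls the operator norm of the off-diagonal part of $B(c)$ only at order $\sqrt{K_M}$, not $\sqrt{K_M/N}$.
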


\begin{proof}
Note that $B(c)$ solves the equation
\begin{equation} \label{MF_eqB}
- \dot B(c) + B(c)^2 = B(f) - E, 
\qquad \text{with} \quad 
E_{ik} = \sum_{j\neq i} c^i_{kj} c^j_{ji}.
\end{equation}
By \Cref{prop_est2}, the Frobenius norm of $E$ is bounded by ${K_M}/N$.
Let now $\bs\xi$ solve
\[
\begin{dcases}
\dot {\bs\xi} = - B(c)^\trn \bs\xi \quad \text{on} \ [\tau,T]
\\
\bs\xi(\tau) = \bs\zeta \in \bb S^{N-1}.
\end{dcases}
\]
Note that by \Cref{prop_est2,prop_est3} we have
\[
B(c) \leq \bigl( K' + \tilde K \bigr) I_N,
\]
where we denoted by $K'$ the constant on the right-hand side of \eqref{eq_est3}, while $\tilde K \defeq \sqrt{K_M/N}$; therefore
\[
|\bs\xi(t)| \geq e^{-(K' + \tilde K)(t-\tau)} \quad \forall\, t \in [\tau,T].
\]
Using \eqref{MF_eqB} we get
\[ \begin{split}
- \bigl(B(c) \bs\xi \cdot \bs\xi \bigr)\dot{\phantom{|}} &= \bigl(B(f)-E + B(c)B(c)^\trn\bigr) \bs\xi \cdot \bs\xi 
\\
& \geq \bigl( M_f - \tilde K^2 \bigr) |\bs\xi|^2,
\end{split}
\]
whence
\[ \begin{split}
B(c)(\tau) \bs\zeta \cdot \bs\zeta &\geq B(g)\bs\xi(T) \cdot \bs\xi(T) + \bigl( M_f - \tilde K^2 \bigr) \int_\tau^T |\bs\xi(t)|^2 \,\di t
\\
&\geq M_g e^{-2(K' + \tilde K)(T-\tau)} + \frac{M_f - \tilde K^2}{K'+\tilde K} \Bigl( 1 - e^{-2(K' + \tilde K)(T-\tau)} \Bigr)
\\
&\geq \min \biggl\{ M_g, \frac{M_f - \tilde K^2}{K'+\tilde K} \biggr\},
\end{split}
\]
which yields the desired conclusion.
\end{proof}

We have now all the ingredients to prove \Cref{MF_thm}.

\begin{proof}[Proof of \Cref{MF_thm}]
Fix $M$ such that
\begin{equation} \label{boundonM}
0 < M < \min \biggl\{ M_g, \frac{M_f}{2K' + \sqrt{2M_f}} \biggr\}.
\end{equation}
By the Cauchy--Lipschitz theorem there exists $\tau \in [0,T)$ such that \eqref{MF_sys} has a unique absolutely continuous solution on $(\tau,T]$. Since $B(g) \geq M_g > M$, by taking $\tau$ closer to $T$ if necessary, by continuity we may suppose that $B(c) \geq M I$ on $(\tau,T]$. Furthermore, by Assumption \ref{mfl_ass} and continuity, provided that
\[
N > \frac{4C_g}{M^2},
\]
we may also suppose that condition~\eqref{extracond_loop} is satisfied. Therefore we can well define
\[
\tau^* \defeq \min \biggl\{ \tau \in [0,T) :\ 
\begin{array}{c}
\text{$\exists!$ $c \in \mathrm{AC}((\tau,T];\Sym(N))^N$ solving \eqref{MF_sys} on $(\tau, T]$}
\\
\text{with $B(c) \geq MI$ and satisfying \eqref{extracond_loop}}
\end{array}
  \biggr\}.
\]
We are going to prove that $\tau^* = 0$, provided that $N$ is chosen large enough; so suppose for a contradiction that $\tau^* > 0$. 
Let $c \in  \mathrm{AC}((\tau^*,T];\Sym(N))^N$ be the maximal solution satisfying the properties above. By \Cref{prop_est2,prop_est3}, $c$ continuously extends on $[\tau^*,T]$; therefore, by the Cauchy--Lipschitz theorem, $c$ extends to a solution to \eqref{MF_sys} on some interval $(\hat\tau,T]$ with $\hat\tau < \tau^*$. Without loss of generality, suppose that $\hat\tau$ is so close to $\tau^*$ that, by continuity, $B(c) \geq \frac12 MI$ on $[\hat\tau,T]$. By \Cref{prop_prpg} and \eqref{boundonM}, we have $B(c) \geq MI$ on $[\hat\tau,T]$, provided that
\[
N \geq \frac{2K_{\frac12M}}{\frac12C_f^{\frac12} \wedge M_f},
\]
while by \Cref{prop_est2}, we have that \eqref{extracond_loop} holds on $[\hat\tau,T]$, provided that
\[
N \geq \frac{16K_{\frac12 M}}{M^2}.
\]
This contradicts the minimality of $\tau^*$, thus proving that it must be $\tau^* = 0$.
\end{proof}

\subsection{Long-time stability of solutions} \label{sec_stab}

Now we prove \Cref{prop_cctaudist}, which in turn will allow to obtain \Cref{convthm}.

\begin{proof}[Proof of \Cref{prop_cctaudist}]
We are going to use the following notations, where $\Delta \defeq B(c) - B(\bar c) = B(c-\bar c)$:
\[ \begin{gathered}
\check\Delta_{jk} \defeq \Delta_{jk} \mathsf 1_{j \neq k}, \qquad \delta_i \defeq \Delta_{ii}, \qquad D^i_{jk} \defeq (c-\bar c)^i_{jk} \mathsf 1_{j \neq i \neq k}.
 \end{gathered}
\]
We have
\begin{equation} \label{est1_cD}
 \begin{split}
\frac12 \frac{\di}{\di t} |\check \Delta|^2 &= \sum_{i,k} \delta_k c^i_{ik} \check\Delta_{ik} + \sum_{i,k} \bar c^k_{kk} |\check \Delta_{ik}|^2 + \sum_{i,j,k} \check\Delta_{jk} c^{i}_{ij} \check\Delta_{ik} 
\\
&\quad + \sum_{\substack{i,j,k \\ j \neq k}} \Delta_{ij} \bar c^j_{jk} \check\Delta_{ik} + \sum_{i,j,k} c^i_{jk} \check\Delta_{ji} \check\Delta_{ik} + \sum_{i,j,k} D^i_{jk} \bar c^j_{ji} \check\Delta_{ik}
\\
&= \sum_{i,k} (\delta_k c^i_{ik} + \delta_i \bar c^i_{ik} ) \check\Delta_{ik}  + \sum_{i,j,k} \check\Delta_{jk} c^{i}_{ij} \check\Delta_{ik} 
\\
&\quad + \sum_{i,j,k} \check\Delta_{ij} \bar c^j_{jk} \check\Delta_{ik} 
+ \sum_{i,j,k} c^i_{jk} \check\Delta_{ji} \check\Delta_{ik} + \sum_{i,j,k} D^i_{jk} \bar c^j_{ji} \check\Delta_{ik}
\\
& \geq \biggl( M - \sqrt{\frac CN}\,\biggr) |\check\Delta|^2 - \frac1N \frac{C}{2M} \bigl( |\bs\delta|^2 + |D|^2 \bigr),
\end{split}
\end{equation}
where we have used the Cauchy--Schwarz and Young's inequalities, and the fact that both $c$ and $\bar c$ are $M$-strongly displacement monotone. 
Similarly,
\begin{equation} \label{est2_D}
\begin{split}
\frac12 \frac{\di}{\di t} |D|^2 &= 2 \sum_i \tr(D^i B(c) D^i) + 2 \sum_{\substack{i,j,h,k \\ j \neq i}} D^i_{hk} \bar c^i_{jh} \Delta_{jk} + \sum_{i,h,k} D^i_{hk} \Delta_{ik} c^i_{ih} + \sum_{i,h,k} D^i_{hk} \bar c^i_{ik} \Delta_{ih}
\\[-3pt]
&\geq M |D|^2 - \frac{4C}{NM} \bigl( |\check\Delta|^2 + |\bs\delta|^2 \bigr)
\end{split}
\end{equation}
and
\begin{equation} \label{est3_d}
\begin{split}
\frac12 \frac{\di}{\di t} |\bs\delta|^2 &= \sum_i \delta_i^2 (c^i_{ii} + \bar c^i_{ii}) + 2 \sum_{i,j} \delta_i \check \Delta_{ij} c^j_{ji} + 2 \sum_{i,j} \delta_i \bar c^i_{ij} \check \Delta_{ji}
\\
& \geq M |\bs\delta|^2 - \frac1N \frac{4C}{M} | \check \Delta |^2,
\end{split}
\end{equation}
Summing \eqref{est1_cD}, \eqref{est2_D} and \eqref{est3_d}, we obtain, for any $\epsilon > 0$,
\[
\frac{\di}{\di t} \bigl( |\check\Delta|^2 + |D|^2 + |\bs\delta|^2 \bigr) \geq (2-\epsilon)M \bigl( |\check\Delta|^2 + |D|^2 + |\bs\delta|^2 \bigr)
\]
provided that $N \geq \bar N(C,M,\epsilon)$ large enough. Note that, since  $|\check\Delta|^2 + |D|^2 + |\bs\delta|^2 = |c - \bar c|^2$, this is equivalent to
\[
\frac{\di}{\di t} |c -\bar c|^2 \geq (2-\epsilon) M |c - \bar c|^2
\]
and it implies that
\[
|c(t)-\bar c(t)|^2 \leq 3NC_{g-\bar g} e^{-(2-\epsilon)M(T-t)} \qquad \forall\, t \leq T.
\]

At this point we can go back to \eqref{est1_cD} and plug in such an estimate with $\epsilon \leq \frac14$, thus obtaining the following estimates for any $N \geq \bar N'(C,M,\epsilon)$ large enough. First,
\begin{equation} \label{finest_cD}
|\check\Delta(t)|^2 \leq C_{g-\bar g} e^{-(2-2\epsilon)M(T-t)} + \frac{3C^2}{2NM}(T-t)e^{-(2-\epsilon)M(T-t)} \leq 2C_{g-\bar g} e^{-(2-2\epsilon)M(T-t)} \quad \forall t \leq T.
\end{equation}
Similarly, as we can also estimate
\[
\frac12 \frac{\di}{\di t} |\delta_i|^2 \geq M|\delta_i|^2 - \frac1N \frac{4C}{M} |\check\Delta|^2 \qquad \forall \, i \in \{1,\dots,N\},
\]
we obtain
\begin{equation} \label{finest_d}
\sup_{1 \leq i \leq N} |\delta_i(t)|^2 \leq 2C_{g-\bar g} e^{-(2-2\epsilon)M(T-t)} \qquad \forall t \leq T.
\end{equation}
Then note that we have the following estimate, alternative of \eqref{est2_D}:
\[
\frac12 \frac{\di}{\di t} |D|^2 \geq M|D|^2 - \frac{6C}{NM} \Bigl(|\check\Delta|^2 + \sup_i |\delta_i|^2 \Bigr);
\]
therefore, using \eqref{finest_cD} and \eqref{finest_d},
\[
|D(t)|^2 \lesssim \frac{e^{-(2-3\epsilon)M(T-t)}}{N},
\]
with implied constant (also below) depending only on $C$, $M$ and $C_{g-\bar g}$.
Finally, we can start from the evolutions of $\sum_{k \neq i} |c^i_{ik}-\bar c^i_{ik}|^2$ and $\sum_{k \neq i} |c^k_{ki} - \bar c^k_{ki}|^2$ and then use the above bounds in a similar way to obtain
\[
\sum_{k \neq i} \bigl( |c^i_{ik}(t) - \bar c^i_{ik}(t)|^2 + |c^k_{ki}(t) - \bar c^k_{ki}(t)|^2 \bigr) \lesssim \frac{e^{-(2-4\epsilon)M(T-t)}}{N}.
\]
We omit the details of these last estimates.
\end{proof}

As said, this is sufficient to prove \Cref{convthm}.

\begin{proof}[Proof of \Cref{convthm}]
The first part comes from identity \eqref{limlimcorr} and \Cref{prop_cctaudist} when $\bar g = c(T-\tau)$, so that $\bar c = c(\cdot - \tau)$. We only need to prove the last claim. Recall that the convergence of $\{c^T\}_{T>0}$ is equivalent to that of $\{c^{T_0}(t)\}_{t \leq T_0}$ as $t \to - \infty$ for any fixed $T_0>0$. So, as $\lim_{t \to -\infty} c^{T_0}(t)$ is clearly independent of $T_0$, we deduce that $\frk c(t)$ is independent of $t$, i.e.\ it is constant. By the equation of $c = c^{T_0}$ and the fundamental theorem of calculus, $\lim_{t \to - \infty} \dot c(t) = 0$, then $\frk c$ solves \eqref{stat_Ric} by passing \eqref{MF_sys} to the limit as $t \to - \infty$. To prove that $\frk c$ is the unique solution to \eqref{stat_Ric} satisfying \eqref{eq_mfl_ass} and \eqref{eq_dmcass} with $\star = \frk c$ and $(C_{\frk c},M_{\frk c}) = (C,M)$, note that any other such solution $\bar{\frk c}$ would also be a stationary solution to \eqref{MF_sys} with terminal condition $\bar g = \bar{\frk c}$. Then by \Cref{prop_cctaudist}, letting $t \to -\infty$ we deduce that $\frk c = \bar{\frk c}$.
\end{proof}

\section{Large-population limits} \label{sec_lim}

We recall that in this section Assumption~\ref{lim_ass} replaces \ref{mfl_ass} as a strengthen version of it; Assumption~\ref{dmc_ass} is still in force as is.
We are going to approach the proof of \Cref{mainthmlim} with a sequence of remarks addressing some implications of this setting. In the following, in order to ease the notation, we will set (without loss of generality) $d=1$.

\begin{rmk}[Existence of a representative value function]  \label{rmk_cTsym}
By symmetry, for each $T > 0$ there exists $\bar c^T_N \colon \Lambda \times \Lambda^{N-1} \times [0,T] \to \Sym(N)$ such that the solution to the $N$-dimensional linear-quadratic Nash system is given by
\[ \begin{split}
u^{T,i}(t,\bs x) &= \bar u^T_N(\lambda^i_N,\bs\lambda_N^{-i};t,x^i,\bs x^{-i}) 
\\
&\defeq Q_{\bar c^T_N(\lambda_N^i,\bs\lambda_N^{-i};t)}(x^i,\bs x^{-i}) + \int_t^T \tr \bar c^T_N(\lambda_N^i, \bs\lambda_N^{-i};s)\,\di s, \qquad \bs x \in \R^N
\end{split}
\]
and is invariant under permutations of the coordinates of $\Lambda^{N-1}$ and $\R^{N-1}$ (i.e.\ the second and fifth variables of $\bar u^T_N$), and likewise for the infinite horizon limit $v^i$, with corresponding functions $\bar v_N$ and $\bar{\frk c}_N$.
\end{rmk}

\begin{rmk}[The quadratic forms as functions on probability measures] \label{rmk_qfp}
The symmetry assumption in the space variables is equivalent to asking that
\[
\bar f_N = \begin{pmatrix}
(\bar f_N)_{11} & (\bar f_N)_{12} \bs 1_{N-1}^\trn \\
(\bar f_N)_{12} \bs 1_{N-1} & (\bar f_N)_{22} I_{N-1} +(\bar f_N)_{23} \bigl( \bs 1_{N-1}^{\otimes 2} - I_{N-1} \bigr)
\end{pmatrix}
\]
where $\bs 1_{N-1}$ is the $(N-1)$-dimensional vector of ones, and likewise for $\bar g_N$, $\bar c^T_N$ and $\bar{\frk c}_N$. Then
\[ \begin{split}
Q_{\bar f_N}(x^i, \bs x^{-i}) &= \frac12 (\bar f_N)_{11} |x^i|^2 + (N-1)(\bar f_N)_{12} x^i \cdot \Bigl( \frac1{N-1} \sum_{j \neq i} x^j \Bigr) 
\\
&\quad + \frac12 (N-1)\bigl( (\bar f_N)_{22} - (\bar f_N)_{23}\bigr) \frac1{N-1} \sum_{j \neq i} |x^j|^2 + (N-1)^2(\bar f_N)_{23}\Bigl| \frac1{N-1} \sum_{j \neq i} x^j \Bigr|^2 ,
\end{split}
\]
and upon defining, for $x \in \R$ and $m \in \Pc_2(\R)$,
\[ \begin{split}
F_N(\lambda; x,m) &\defeq \frac12 \bar f_N(\lambda)_{11} |x|^2 + (N-1)\bar f_N(\lambda)_{12} x \cdot \beta(m)
\\
&\quad + \frac12(N-1)\bigl( \bar f_N(\lambda)_{22} - \bar f_N(\lambda)_{23}\bigr) \int_\R |y|^2 \,\di m(y) + (N-1)^2\bar f_N(\lambda)_{23} |\beta(m)|^2
\end{split}
\]
(where we recall that $\beta(m) \defeq \int_{\R} y \,\di m(y)$), 
we see that
\[
Q_{\bar f^i(\lambda)}(\bs x) = F_N(\lambda; x^i,m_{\bs x^{-i}}).
\]
Analogous considerations also hold for $\bar g_N$ and, in view of \Cref{rmk_cTsym}, for $\bar c^T_N$ and $\bar{\frk c}_N$. 
\end{rmk}

\begin{rmk}[Large-population limit data] \label{rmk_lplf}
By \Cref{rmk_qfp}, estimate \eqref{eq_mfl_ass} of Assumption~\ref{mfl_ass} yields
\begin{equation} \label{estdec}
|\star_{11} | \lesssim 1,
\qquad
| \star_{12} | \lesssim \frac1N,
\qquad
| \star_{22} | \lesssim \frac1{N^{\frac32}},
\qquad
| \star_{23} | \lesssim \frac1{N^2},
\end{equation}
for $\star \in \{\bar f_N, \bar g_N \}$. This implies that there exists $\call F \colon \Lambda \to \Sym(2)$ such that 
we have, up to subsequences, 
\[
(D_x)^k Q_{\bar f_N(\lambda)}(x^i,\bs x^{-i}) - (D_x)^k Q_{\call F(\lambda)}(x^i, \beta(m_{\bs x^{-i}})) \to 0 \qquad \text{as} \ \ N \to \infty \qquad \text{for} \ k \in \N
\]
uniformly in $\Lambda \times K \times \Pc_2(\R)$ for any $K \subset \R$ compact. Furthermore, one actually sees that such an $\call F$ must be unique, so the above convergence holds along the whole sequence. An analogous convergence holds for $Q_{\bar g_N}$.
\end{rmk}

\begin{rmk}[Displacement monotonicity revisited] \label{rmk_dmr}
According to \Cref{rmk_qfp}, we have
\[
B(f) = \mathrm{diag}\bigl((\bar f_N)_{11}(\lambda^i_N)\bigr)_{1 \leq i \leq N} + \mathrm{diag}\bigl((\bar f_N)_{12}(\lambda^i_N)\bigr)_{1 \leq i \leq N} \bigl( \bs 1_{N}^{\otimes 2} - I_{N} \bigr),
\]
so
\[ \begin{split}
B(f)\bs x \cdot \bs x &= \sum_j  (\bar f_N)_{11}(\lambda^j_N) |x^j|^2 + 2\sum_{\substack{i,j \\ i \neq j}} (\bar f_N)_{12}(\lambda^i_N) x^i \cdot x^j
\\
&= N \int_{\Lambda \times \R} \Bigl( \bigl( (\bar f_N)_{11}(\lambda) - 2(\bar f_N)_{12}(\lambda) \bigr) |x|^2 + 2N \beta(m_{\bs x}) \cdot (\bar f_N)_{12}(\lambda) x \Bigr) \,\di m_{(\bs\lambda_N:\bs x)}(\lambda,x),
\end{split}
\]
with $m_{(\bs\lambda_N:\bs x)} \defeq \frac1N \sum_j \delta_{(\lambda^j_N,x^j)} =  \frac1N \sum_j \delta_{\lambda^j_N} \otimes \delta_{x^j}$. By compactness there exists $\rho \in \Pc_2(\Lambda)$  such that, up to subsequences, $m_{\bs\lambda_N} \to \rho$ as $N \to \infty$, and, given $m \in \Pc_2(\R)$, we can choose $\bs x \in \R^N$ in such a way that $m_{\bs x} \to m$. 
Then, in the large-population limit, Assumption~\ref{dmc_ass} yields
\begin{equation} \label{pardismon}
\int_{\Lambda \times \R \times \R} \Bigl( D_x Q_{\call F(\lambda)}(y,\beta(m)) - D_x Q_{\call F(\lambda)}(y',\beta(m')) \Bigr) \cdot (y-y') \,\di \varpi(\lambda,y,y') \geq M_f W_2(m,m')^2,
\end{equation}
for all $m,m' \in \Pc_2(\R)$ and {some} $\varpi \in \Pc_2(\Lambda \times \R \times \R)$ that has $\rho$, $m$ and $m'$ as its marginals. Note that in fact \eqref{pardismon} is a natural generalisation of a strong displacement monotonicity condition to the label-dependent function $(\lambda,x,m) \mapsto Q_{\call F(\lambda)}(x,\beta(m))$.
Conversely, it is also possible to show that we could modify Assumption~\ref{lim_ass} by define $Q_{f^i}$ by projecting onto the space of empirical measures a function $Q_{\call F(\lambda)}$ satisfying \eqref{pardismon} (and likewise for $Q_{g^i}$), and have Assumptions~\ref{mfl_ass} and \ref{dmc_ass} verified.
\end{rmk}

We are now ready to prove our second main theorem.

\begin{proof}[Proof of \Cref{mainthmlim}]
From \Cref{rmk_cTsym} and \Cref{mainthm}, we know that \eqref{estdec}
also holds for $\star \in \{\bar c^T_N, \bar{\frk c}_N \}$.  On the other hand, the Lipschitz regularity with respect to $\bs\lambda \in \Lambda^N$ of the data is also transferred to the solutions $\bar c^T_N$ and $\bar{\frk c}_N$, in a suitably weighted manner. In fact, from \Cref{prop_cctaudist} when $g^i = \bar g(\lambda^i)$ and $\bar g^i = \bar g(\bar \lambda^i)$, by precisely tracking the constants appearing in \eqref{finest_cD}, \eqref{finest_d} and the following estimates, we see that, for all $\lambda,\lambda' \in \Lambda$, $\bs\lambda, \bs{\lambda}' \in \Lambda^{N-1}$,
\[
\sup_{T>0} \sup_{t \in [0,T]} \sum_{1 \leq j,k \leq N} \bigl| (\bar c^{T}_N)_{jk}(\lambda, \bs\lambda, t) - (\bar c^{T}_N)_{jk}(\lambda', \bs{\lambda}',t) \bigr|^2 \leq \bar C \Bigl( |\lambda - \lambda'|^2 + \frac{|\bs\lambda - \bs{\lambda}'|^2}{N} \Bigr),
\]
with $\bar C$ depending only on $C$, $M$ and the Lipschitz constant of $\bar g_N$---and thus, in particular, independent of $N$; then, letting $T \to +\infty$, we also have
\[
\sum_{1 \leq j,k \leq N} \bigl| (\bar{\frk c}_N)_{jk}(\lambda, \bs\lambda) - (\bar{\frk c}_N)_{jk}(\lambda', \bs{\lambda}') \bigr|^2 \leq \bar C \Bigl( |\lambda - \lambda'|^2 + \frac{|\bs\lambda - \bs{\lambda}'|^2}{N} \Bigr).
\]
In particular, the above estimates with $\lambda = \lambda'$ imply that the maps $\bs\lambda \mapsto \bar c^T_N(\lambda,\bs\lambda)$ and $\bs\lambda \mapsto \bar {\frk c}_N(\lambda,\bs\lambda)$ are Lipschitz continuous from $\Lambda^{N-1}$---equipped with the distance Wasserstein distance $W_2$, induced by embedding $\Lambda^{N-1} \hookrightarrow \Pc_2(\Lambda)$ via $\bs\lambda \mapsto m_{\bs\lambda} \defeq \frac1{N-1} \sum_j \delta_{\lambda^j}$---to $\Sym(N)$ with the Frobenius norm.

By analogous arguments as for \cite[Theorem~7.5]{CR-CPAM26}, estimates~\eqref{estdec} together with the Lipschitz continuity above ensures the existence of a function $\call U^T\in \Lip( \Lambda \times \Pc_2(\Lambda)\times [0,T];\Sym(2))$ such that, given
\[
\begin{split}
U^T(\lambda,\rho;t,x,b) &\defeq Q_{\call U^T(\lambda,\rho;t)}(x,b)
+ \int_t^T \call U^T(\lambda,\rho;s)_{11} \,\di s,
\end{split}
\]
we have, up to subsequences, 
\[
(D_x)^k \bar u_N^T(\lambda,\hat{\bs\lambda};t,x,\hat{\bs x}) - (D_x)^k U^{T}(\lambda, m_{\hat{\bs\lambda}}; t, x, \beta(m_{\hat{\bs x}})) \to 0 \qquad \text{as} \ \ N \to \infty \qquad \text{for} \ k \in \N
\]
uniformly in $\Lambda \times \Pc_2(\Lambda) \times [0,T] \times K \times \Pc_2(\R)$ for any $K \subset \R$ compact. Like $\call F$ in \Cref{rmk_lplf}, $\call U^T$ is in fact unique, so the above convergence holds for the whole sequence. Similar arguments hold for $\bar v_N$, so we identify in this way a limit ergodic value function $V$ as well.

At this point, by computing the derivatives of $U^T$, it is easy to see that the following convergence also holds, as $N\to\infty$, uniformly in $\Lambda \times \Pc_2(\Lambda) \times [0,T] \times K \times \Pc_2(\R)$:
\[
N^k (D_{x^j})^k \bar u^T_N(\lambda^i,\bs\lambda^{-i};t,x^i,\bs x^{-i}) - (D_b)^k U^T(\lambda^i, m_{\bs\lambda^{-i}};t,x^i,\beta(m_{\bs x^{-i}})) \to 0 \qquad \text{for $k \in \N$, $j \neq i$.}
\]
This implies that
\[
\Delta u^T_N(\lambda^i,\bs\lambda^{-i};t,x^i,\bs x^{-i}) - \Delta_{x^i} U^T(\lambda^i, m_{\bs\lambda^{-i}};t,x^i,\beta(m_{\bs x^{-i}})) \to 0
\]
as well as
\[ \begin{split}
&\sum_{\substack{1 \leq j \leq N \\ j \neq i}} D_{x^j} \bar u^T_N(\lambda^j,\bs\lambda^{-j};t,x^j,\bs x^{-j}) D_{x^j} \bar u^T_N(\lambda^i,\bs\lambda^{-i};t,x^i,\bs x^{-i})
\\
&\qquad - \int_{\Lambda \times \R} D_x U^T(\lambda,m_{\bs\lambda^{-i}};t,x,m_{\bs x^{-i}}) D_b U^T(\lambda^i, m_{\bs\lambda^{-i}};t,x^i,\beta(m_{\bs x^{-i}})) \,\di( m_{\bs\lambda^{-i}} \otimes m_{\bs x^{-i}} )(\lambda,x)
\\[5pt]
&= \int_{\Lambda \times \R} \Bigl( N D_{x} \bar u^T_N(\lambda,\bs\lambda^{-i};t,x,\bs x^{-i}) D_{x^j} \bar u^T_N(\lambda^i,\bs\lambda^{-i};t,x^i,\bs x^{-i})
\\
&\qquad - D_x U^T(\lambda,m_{\bs\lambda^{-i}};t,x,m_{\bs x^{-i}}) D_b U^T(\lambda^i, m_{\bs\lambda^{-i}};t,x^i,\beta(m_{\bs x^{-i}})) \Bigr) \di( m_{\bs\lambda^{-i}} \otimes m_{\bs x^{-i}} )(\lambda,x)
\\
&\qquad + \frac1N \sum_{\substack{1 \leq j \leq N \\ j \neq i}} D_x \bar u^T_N\Bigr|^{(\lambda^j,\bs\lambda^{-j};t,x^j,\bs x^{-j})}_{(\lambda^j,\bs\lambda^{-i};t,x^j,\bs x^{-i})} D_b U^T(\lambda^i, m_{\bs\lambda^{-i}};t,x^i,\beta(m_{\bs x^{-i}})) \ \longrightarrow \ 0,
\end{split}
\]
where we also used that
\[
\biggl| D_x \bar u^T_N\Bigr|^{(\lambda^j,\bs\lambda^{-j};t,x^j,\bs x^{-j})}_{(\lambda^j,\bs\lambda^{-i};t,x^j,\bs x^{-i})} \biggr| \lesssim W_2(m_{\bs\lambda^{-j}},m_{\bs\lambda^{-i}}) + W_2(m_{\bs x^{-j}},m_{\bs x^{-i}}) \lesssim \frac{|\lambda^i - \lambda^j| + |x^i - x^j|}{\sqrt N},
\]
which comes from the Lipschitz continuity with respect to $\bs\lambda$ and standard properties of the Wasserstein distance. Analogous considerations are also true when considering $\bar v_N$ instead of $\bar u^T_N$.

This proves the existence and the structure of $U^T$ and $V$, as well as part (a). To prove part (b), given the limit matrix $\call F \in \Lip(\Lambda;\Sym(2))$ of \Cref{rmk_lplf}, for any $\lambda \in \Lambda$, consider the system \eqref{NS} with $Q_{f^1 \otimes I_d}(\bs x) = Q_{\call F(\lambda)}(x^1,\beta(m_{\bs x^{-1}}))$ and $Q_{f^i \otimes I_d}(\bs x) = Q_{\bar f_N(\lambda^i_N) \otimes I_d}(x^i,\bs x^{-i})$ for $i \neq 1$; then, denoting by $\tilde u^T_N$ the solution to this system, by part (a) we have in particular, for any $x,b \in \R^d$,
\[
D_1^k \tilde u_N^{T,1}(t,(x,b,\dots,b)) \to D_x^k U^{T}(\lambda, \rho; t, x, b) \qquad \forall\,k \in\N.
\]
Therefore, by the above convergences, we can pass the Nash system to the large population limit and obtain \eqref{MEUT}, with $F = Q_{\call F}$ and $G = Q_{\call G}$. Similarly, one proves that $(V,\Gamma)$ solves \eqref{MEV}, with $\Gamma = \Delta_x V$. Finally, part (c) follows by letting $N \to \infty$ in \eqref{uiviconv}.
\end{proof}


\begin{thebibliography}{88}
\bibitem{BarPri} {\sc M.\ Bardi and F.\ S.\ Priuli}, Linear-quadratic $N$-person and mean-field games with ergodic cost, \emph{SIAM J.\ Control Optim.\ }{\bf52} (2014), 3022--3052.
\bibitem{BayJiaG} {\sc E.\ Bayraktar, Z.\ Cao, and J.\ Jian}, Long-time behavior and turnpike properties of linear-quadratic graphon mean field control problems, preprint arXiv:2607.18000 (2026).
\bibitem{BayJia25} {\sc E.\ Bayraktar and J.\ Jian}, Ergodicity and turnpike properties of linear-quadratic mean field control problems, preprint arXiv:2502.08935 (2025).
\bibitem{BayJia26} {\sc E.\ Bayraktar and J.\ Jian}, Uniform-in-time convergence and turnpike properties of linear-quadratic mean field control problems with common noise, preprint arXiv:2601.07815 (2026).
\bibitem{BenFre84} {\sc A.\ Bensoussan and J.\ Frehse}, Nonlinear elliptic systems in stochastic game theory, \emph{J.\ Reine Angew.\ Math.\ }{\bf350} (1984), 23--67.
\bibitem{BenFreProc} {\sc A.\ Bensoussan and J.\ Frehse}, Ergodic Bellman systems for stochastic games in arbitrary dimension, \emph{Proc.\ R.\ Soc.\ A} {\bf449} (1995), 65--77.
\bibitem{Car13} {\sc P.\ Cardaliaguet}, Long time average of first order mean field games and weak KAM theory, \emph{Dyn.\ Games Appl.\ }{\bf3} (2013), 473--488.
\bibitem{CarLC} {\sc P.\ Cardaliaguet}, The convergence problem in mean field games with local coupling, \emph{Appl.\ Math.\ Optim.\ }{\bf76} (2017), 177--215.
\bibitem{CCP} {\sc P.\ Cardaliaguet, M.\ Cirant, and A.\ Porretta}, Remarks on Nash equilibria in mean field game models with a major player, \emph{Proc.\ Amer.\ Math.\ Soc.\ }{\bf148} (2020), 4241--4255.
\bibitem{CDLL} {\sc P.\ Cardaliaguet, F.\ Delarue, J.-M.\ Lasry, and P.-L.\ Lions}, \emph{The master equation and the convergence problem in mean field games}, Annals of Mathematics Studies {\bf201}, Princeton University Press, Princeton,
NJ, 2019.
\bibitem{CG15} {\sc P.\ Cardaliaguet and P.\ J.\ Graber}, Mean field games systems of first order, \emph{ESAIM Control Optim.\ Calc.\ Var.\ }{\bf21} (2015), 690--722.
\bibitem{CLLP1} {\sc P.\ Cardaliaguet, J.-M.\ Lasry, P.-L.\ Lions, and A.\ Porretta}, Long time average of mean field games, \emph{Netw.\ Heterog.\ Media} {\bf7} (2012), 279--301.
\bibitem{CLLP2} {\sc P.\ Cardaliaguet, J.-M.\ Lasry, P.-L.\ Lions, and A.\ Porretta}, Long time average of mean field games with a nonlocal coupling, \emph{SIAM J.\ Control Optim.\ }{\bf51} (2013), 3558--3591.
\bibitem{CarRai} {\sc P.\ Cardaliaguet and C.\ Rainer}, An example of multiple mean field limits in ergodic differential games, \emph{Nonlinear Differ.\ Equ.\ Appl.\ }{\bf27} (2020), 2.
\bibitem{CCDE} {\sc A.\ Cecchin, G.\ Conforti, A.\ Durmus, K.\ Eichinger}, The exponential turnpike phenomenon for mean field game systems: weakly monotone drifts and small interactions, \emph{Electron.\ J.\ Probab.\ }{\bf31} (2026), 1--70.
\bibitem{CecDia} {\sc A.\ Cecchin and J.\ Dianetti}, Convergence for linear quadratic potential mean field games, preprint arXiv:2602.14842 (2026).
\bibitem{CJR25} {\sc M.\ Cirant, J.\ Jackson, and D.\ F.\ Redaelli}, A non-asymptotic approach to stochastic differential games with many players under semi-monotonicity, arXiv:2505.01526.
\bibitem{CM25} {\sc M.\ Cirant, A.\ R.\ M\'esz\'aros}, Long time behavior and stabilization for displacement monotone mean field games, preprint arXiv:2412.14903 (2025).
\bibitem{CP21} {\sc M.\ Cirant and A.\ Porretta}, Long time behavior and turnpike solutions in mildly non-monotone mean field games, \emph{ESAIM Control Optim.\ Calc.\ Var.\ }{\bf27} (2021), 86.
\bibitem{CR-DGA} {\sc M.\ Cirant and D.\ F.\ Redaelli}, Some remarks on linear-quadratic closed-loop games with many players, \emph{Dyn.\ Games Appl.\ }{\bf15} (2025), 558--591.
\bibitem{CR-CPAM26} {\sc M.\ Cirant and D.\ F.\ Redaelli}, A priori estimates and large population limits for some nonsymmetric Nash systems with semimonotonicity, \emph{Comm.\ Pure Appl.\ Math.\ }{\bf79} (2026), 3--88.
\bibitem{CohJia} {\sc A.\ Cohen and J.\ Jian}, Quantitative comparison of closed- and open-loop linear-quadratic N-player differential games, preprint arXiv:2609.06233 (2026).
\bibitem{CohZell} {\sc A.\ Cohen and E.\ Zell}, Asymptotic Nash equilibria of finite-state ergodic Markovian mean field games, \emph{Math.\ Oper.\ Res.\ }{\bf51} (2026), 1139--1173.
\bibitem{Dj} {\sc M.\ F.\ Djete}, Large population games with interactions through controls and common noise: convergence results and equivalence between open-loop and closed-loop controls, \emph{ESAIM Control Optim.\ Calc.\ Var.\ }{\bf29} (2023), 39.
\bibitem{GMMZ} {\sc W.\ Gangbo, A.\ R.\ M\'esz\'aros, C.\ Mou, and J.\ Zhang}, Mean field games master equations with nonseparable Hamiltonians and displacement monotonicity, \emph{Ann.\ Probab.\ }{\bf50} (2022), 2178--2217.
\bibitem{HuaZ} {\sc M.\ Huang and M.\ Zhou}, Linear quadratic mean field games: asymptotic solvability and relation to the fixed point approach, \emph{IEEE Trans.\ Autom.\ Control.\ }{\bf65} (2020), 1397--1412.
\bibitem{JM25} {\sc J.\ Jackson and A.\ R.\ M\'esz\'aros}, Quantitative convergence for displacement monotone Mean Field Games of control, arXiv:2507.17014.
\bibitem{JJT} {\sc J.\ Jackson and L.\ Tangpi}, Quantitative convergence for displacement monotone mean field games with controlled volatility, \emph{Math.\ Oper.\ Res.\ }{\bf49} (2023), 2527--2564.
\bibitem{LackAAP} {\sc D.\ Lacker}, On the convergence of closed-loop Nash equilibria to the mean field game limit, \emph{Ann.\ Appl.\ Probab.\ }{\bf 30} (2020), 1693--1761.
\bibitem{LackLF} {\sc D.\ Lacker and L.\ Le Flem}, Closed-loop convergence for mean field games with common noise, \emph{Ann.\ Appl.\ Probab.\ }{\bf 33} (2023), 2681--2733.
\bibitem{LMWZ} {\sc M.\ Li, C.\ Mou, Z.\ Wu, and C.\ Zhou}, Linear-quadratic mean field games of controls with non-monotone data, \emph{Trans.\ Am.\ Math.\ Soc.\ }{\bf376} (2023), 4105--4143.
\bibitem{LLL} {\sc P.-L. Lions}, Cours au Collège de France (2008--09), available at college-de-france.fr.
\bibitem{MZMem} {\sc C.\ Mou and J.\ Zhang}, Wellposedness of second order master equations for mean field games with nonsmooth data, \emph{Mem.\ Am.\ Math.\ Soc.\ }{\bf302}, 2024.
\bibitem{PriDGA} {\sc F.\ S.\ Priuli}, Linear-quadratic $N$-person and mean-field games: infinite horizon games with discounted cost and singular limits, \emph{Dyn.\ Games Appl.\ }{\bf5} (2015), 397--419.
\end{thebibliography}
\end{document}